\documentclass[12pt, reqno]{amsart}
\usepackage{amsmath, amsthm, amsxtra, amscd, amsfonts, amssymb, mathrsfs, graphicx, color,ulem, pstricks}

\usepackage{pgfplots}
\pgfplotsset{compat=1.15}
\usepackage{mathrsfs}
\usetikzlibrary{arrows}

\usepackage[bookmarksnumbered, colorlinks, plainpages]{hyperref}
\hypersetup{colorlinks=true,linkcolor=red, anchorcolor=green, citecolor=cyan, urlcolor=red, pdftoolbar=true}
\usepackage{setspace}
\theoremstyle{plain}
\newtheorem{theorem}{Theorem}[section]
\newtheorem{lemma}[theorem]{Lemma}

\newtheorem{corollary}[theorem]{Corollary}
\theoremstyle{definition}

\newtheorem{example}[theorem]{Example}

\theoremstyle{remark}
\newtheorem{remark}[theorem]{Remark}
\numberwithin{equation}{section}

\definecolor{darkgreen}{rgb}{.1,.5,0}

\theoremstyle{plain}

\theoremstyle{definition}

\newtheorem*{Sketch of proof}{Sketch of proof}

\numberwithin{equation}{section}
\begin{document}
	\setcounter{page}{1}
	\title[Cauchy dual  Subnormality of extensions of $m$-isometric  composition operators  ]{Cauchy dual  Subnormality of extensions of $m$-isometric  composition operators on directed graphs}
	
	\author[ V. Devadas, T.  Prasad,  E. Shine Lal ]{ V. Devadas, T.  Prasad and E. Shine Lal }
	
	\address{V. Devadas\endgraf
		Department of Mathematics, Sree Narayana College Alathur, Affiliated to University of Calicut,
		Kerala-678682, 
		India.}
	\email{\textcolor[rgb]{0.00,0.00,0.84}{ v.devadas.v@gmail.com}}
	
	\address{T. Prasad\endgraf
	Department of Mathematics,  Nalanda University, Rajgir-803116, District- Nalanda , Bihar, India}.
	\email{\textcolor[rgb]{0.00,0.00,0.84}{ prasadvalapil@gmail.com}}
	
	\address{E. Shine Lal\endgraf
		Department of Mathematics, University College Thiruvananthapuram,
		Kerala-695034, 
		India.}
	\email{\textcolor[rgb]{0.00,0.00,0.84}{ shinelal.e@gmail.com}}

	\subjclass[2000]{Primary 47B33; Secondary  47B20,  47B38}
	\keywords{ $k$-quasi-$m$-isometric operator, Analyticity, $\Delta_{C_\phi}$- Regularity, Kernel Condition, composition operator, conditional expectation, Cauchy dual subnormality problem}

	\begin{abstract}
		In this paper, we discuss the analyticity,  $\Delta_{C_\phi}$- regularity, kernel condition and subnormality of Cauchy dual of $k$-quasi-$m$-isometric  composition operators on directed graphs with one circuit and multiple branching vertices. 
	\end{abstract}
	\maketitle
	\section{Introduction and preliminaries }
	The class of $m$-isometries, introduced by Agler and Stankus
	\cite{as1,as2,as3}, has been an active topic of research in operator theory due to its rich structural properties and connections with dilation theory.  A natural generalization of this notion is furnished by the class of $k$-quasi-$m$-isometries, which was subsequently investigated in \cite{Mechpra}. The further development for this class have been found in \cite{a,b,c,d,TP1,DSP,DSP2}. Throughout the paper, \(\mathbb N\), \(\mathbb Z_+\), \(\mathbb Z\), \(\mathbb R\), and \(\mathbb C\) denote the sets of positive integers, nonnegative integers, integers, real numbers, and complex numbers, respectively.  Let \(B(\mathcal H)\) denote the algebra of all bounded linear operators on \(\mathcal H\). For \(T\in B(\mathcal H)\) and \(m\in\mathbb N\), let 
	$
	\mathcal{B}_m(T)
	=
	\sum_{j=0}^{m}
	(-1)^j
	\binom{m}{j}
	T^{*(m-j)}T^{m-j}
	$. 
	An operator \(T\in B(\mathcal H)\) is called an \(m\)-isometry if
	$
	\mathcal{B}_m(T)=0.
	$
	More generally,  for \(k,m\in\mathbb N\), \(T\) is said to be a \(k\)-quasi-\(m\)-isometry whenever
	$
	T^{*k}\mathcal{B}_m(T)T^k=0 
	$
	(see \cite{Mechpra}).  If $k=1$, then  $T$ is of quasi-$m$-isometry.
		 Several notions associated with left-invertible operators play an important role in the study of \(k\)-quasi-\(m\)-isometries and their Cauchy duals. Recall that an operator \(T\in B(\mathcal H)\) is said to be analytic if
		 $
		 \mathcal{R}^\infty(T)=\bigcap_{n=0}^{\infty}T^{n}(\mathcal H)=\{0\}.
		 $
		 Analytic operators were introduced and studied extensively in the context of operator models and invariant subspace theory (see \cite{as1,as2,as3,ZJJK }).
		 
		 If $T\in B(\mathcal H)$ is left invertible, then there exists $S\in B(\mathcal H)$ such that $ST=I$. In this case, $T$ is injective, has closed range, and $T^{*}T$ is invertible. The associated Cauchy dual operator, introduced by Shimorin \cite{Shm}, is defined by, 
		 $
		 T'=T(T^{*}T)^{-1}.
		 $
		 It follows that
		 $
		 (T')^{*}=(T^{*}T)^{-1}T^{*}$ and 
		$ (T')^{*}T=I
		 $
		 (see \cite{g2,Shm}).   Following \cite{g2,cbls},  an operator $T\in B(\mathcal H)$ is said to be $\Delta_T$-regular if
		 $
		 \Delta_TT=\Delta_T^{1/2}T\Delta_T^{1/2}, 
		 $ where  $
		 \Delta_T=T^{*}T-I.
		 $
		 Another condition that frequently arises in the study of Cauchy dual operators is the kernel condition 
		 $
		 T^{*}T\big(\mathcal N(T^{*})\big)\subseteq \mathcal N(T^{*})$  (see \cite{g2}).
		 
	Consider  $(X,\mathcal F,\mu)$ is a discrete measure space, where $X$ is a countably infinite set and $\mu(\{x\})>0$ for every $x\in X$. A mapping  $\phi:X\to X$ is said to be a measurable transformation if $\phi^{-1}(S) \in \mathcal F$ ~~ for all ~~ $S\in\mathcal F$. 	A measurable transformation $\phi$ is called nonsingular whenever $\mu\circ\phi^{-1}(S)= 0$ if $\mu(S) = 0, S\in\mathcal F. $ Then  $\mu\circ\phi^{-1}$ is a measure defined by
	$
	(\mu\circ\phi^{-1})(S)=\mu(\phi^{-1}(S)),~~ S\in\mathcal F.
	$
	In this case, the Radon--Nikodym derivative of $\mu\circ\phi^{-1}$ with respect to $\mu$ is denoted by $h$. More generally, for $n\in\mathbb Z_{+}$, the Radon--Nikodym derivative of $\mu\circ\phi^{-n}$ with respect to $\mu$ is denoted by $h_n$, where $h_0=1$ and $h_1=h$.  For a nonsingular transformation $\phi$, the composition operator $C_\phi$ on $L^2(\mu)$ is defined by
	$C_\phi f=f\circ\phi$ for $ f \in L^2(\mu).$
	It is well known that $C_\phi$ is bounded if and only if $h\in L^\infty(\mu)$. In this case,
	$\displaystyle \|C_\phi\|^2=\|h\|_\infty, \quad \|C_\phi^{\,n}f\|^2
	= \int_X h_n |f|^2\,d\mu,\quad n\in\mathbb Z_+.
	$  A multiplication operator  $M_\pi$ induced by $\pi\in L^\infty(\mu)$  is defined by  $M_\pi f=\pi f$ for $f \in L^2(\mu)$. The weighted composition operator induced by $\phi$ and $\pi$ is
	$Wf=\pi(f\circ\phi),~~ f\in L^2(\mu).$ If $\pi_n=\prod_{j=0}^{n-1}\pi\circ\phi^j,$ then $W^n f=\pi_n(f\circ\phi^n),~~ n\in\mathbb Z_+.$

	Now we give the  basic structure of directed graph setting that was considered in \cite{DSP2}.  Let $\kappa\in \mathbb{N}$, $\eta_r\in\mathbb{Z_+\cup\{\infty\}}$  for   $r\in J_{[1,\kappa]}$ and at least one of $\eta_r$ is nonzero. Throughout this  paper  we let 
	$ \displaystyle	X= X_\kappa \cup\bigcup_{r=1}^{\kappa} X_{\eta_r},$
	as a directed graph containing a single circuit,
	where $X_\kappa=\{x_1,x_2,\ldots,x_k\} $ is the set of branching vertices on this circuit, 
	$	X_{\eta_r}=\bigcup_{i=1}^{\eta_r} \{x^r_{i,j}: j\in\mathbb{N}\}$ is the set of all vertices along the $i^{\text{th}}$ branch emanating from $x_r$ for $i\in J_{[1,\eta_r]}$ and $\eta_r$ denotes the number of branches originating from the vertex $x_r$.  Figure 1 illustrates this construction for the specific case $\kappa = 4$ and $\eta_{r}=2$ for all $r\in J_{[1,\kappa]}$.
	\begin{figure}
		\centering
		\begin{tikzpicture}[line cap=round,line join=round,>=triangle 45,x=1cm,y=1cm]
			\clip(-8,-3.28) rectangle (13.72,5.24);
			\draw [->,line width=.5pt] (-1,2) -- (1,2);
			\draw [->,line width=.5pt] (1,2) -- (1,-1);
			\draw [->,line width=.5pt] (1,-1) -- (-1,-1);
			\draw [->,line width=.5pt] (-1,-1) -- (-1,2);
			\draw [->,line width=.5pt] (1,2) -- (2,3);
			\draw [->,line width=.5pt] (1,2) -- (2,1);
			\draw [->,line width=.5pt] (2,1) -- (4,1);
			\draw [->,line width=.5pt] (4,1) -- (6,1);
			\draw [->,line width=.5pt] (1,-1) -- (2,0);
			\draw [->,line width=.5pt] (2,0) -- (4,0);
			\draw [->,line width=.5pt] (4,0) -- (6,0);
			\draw [->,line width=.5pt] (-1,2) -- (-2,3);
			\draw [->,line width=.5pt] (-2,3) -- (-4,3);
			\draw [->,line width=.5pt] (-4,3) -- (-6,3);
			\draw [->,line width=.5pt] (-1,2) -- (-2,1);
			\draw [->,line width=.5pt] (-2,1) -- (-4,1);
			\draw [->,line width=.5pt] (-4,1) -- (-6,1);
			\draw [->,line width=.5pt] (2,3) -- (4,3);
			\draw [->,line width=.5pt] (4,3) -- (6,3);
			\draw [->,line width=.5pt] (1,-1) -- (2,-2);
			\draw [->,line width=.5pt] (2,-2) -- (4,-2);
			\draw [->,line width=.5pt] (4,-2) -- (6,-2);
			\draw [->,line width=.5pt] (-1,-1) -- (-2,0);
			\draw [->,line width=.5pt] (-2,0) -- (-4,0);
			\draw [->,line width=.5pt] (-4,0) -- (-6,0);
			\draw [->,line width=.5pt] (-1,-1) -- (-2,-2);
			\draw [->,line width=.5pt] (-2,-2) -- (-4,-2);
			\draw [->,line width=.5pt] (-4,-2) -- (-6,-2);
			\begin{scriptsize}
				\draw [fill=black] (-1,2) circle (1.5pt);
				\draw[color=black] (-0.84,2.4) node {$x_1$};
				\draw [fill=black] (1,2) circle (1.5pt);
				\draw[color=black] (1,2.4) node {$x_2$};
				\draw [fill=black] (1,-1) circle (1.5pt);
				\draw[color=black] (1,-1.4) node {$x_3$};
				\draw [fill=black] (-1,-1) circle (1.5pt);
				\draw[color=black] (-0.84,-1.4) node {$x_4$};
				\draw [fill=black] (2,3) circle (1.5pt);
				\draw[color=black] (2.16,3.43) node {$x_{1,1}^2$};
				\draw [fill=black] (2,1) circle (1.5pt);
				\draw[color=black] (2.16,1.43) node {$x_{2,1}^2$};
				\draw [fill=black] (4,3) circle (1.5pt);
				\draw[color=black] (4.16,3.43) node {$x_{1,2}^2$};
				\draw [fill=black] (6,3) circle (1.5pt);
				\draw[color=black] (6.16,3.43) node {$x_{1,3}^2$};
				\draw [fill=black] (7,3) circle (.5pt);
				\draw [fill=black] (7.2,3) circle (.5pt);
				\draw [fill=black] (7.4,3) circle (.5pt);
				\draw [fill=black] (7,1) circle (.5pt);
				\draw [fill=black] (7.2,1) circle (.5pt);
				\draw [fill=black] (7.4,1) circle (.5pt);
				\draw [fill=black] (7,0) circle (.5pt);
				\draw [fill=black] (7.2,0) circle (.5pt);
				\draw [fill=black] (7.4,0) circle (.5pt);
				\draw [fill=black] (7,-2) circle (.5pt);
				\draw [fill=black] (7.2,-2) circle (.5pt);
				\draw [fill=black] (7.4,-2) circle (.5pt);
				\draw [fill=black] (-7,3) circle (.5pt);
				\draw [fill=black] (-7.3,3) circle (.5pt);
				\draw [fill=black] (-7.6,3) circle (.5pt);
				\draw [fill=black] (-7,1) circle (.5pt);
				\draw [fill=black] (-7.3,1) circle (.5pt);
				\draw [fill=black] (-7.6,1) circle (.5pt);
				\draw [fill=black] (-7,0) circle (.5pt);
				\draw [fill=black] (-7.3,0) circle (.5pt);
				\draw [fill=black] (-7.6,0) circle (.5pt);
				\draw [fill=black] (-7,-2) circle (.5pt);
				\draw [fill=black] (-7.3,-2) circle (.5pt);
				\draw [fill=black] (-7.6,-2) circle (.5pt);
				
				\draw [fill=black] (4,1) circle (1.5pt);
				\draw[color=black] (4.16,1.43) node {$x_{2,2}^2$};
				\draw [fill=black] (6,1) circle (1.5pt);
				\draw[color=black] (6.16,1.43) node {$x_{2,3}^2$};
				\draw [fill=black] (2,0) circle (1.5pt);
				\draw[color=black] (2.16,0.43) node {$x_{1,1}^3$};
				\draw [fill=black] (4,0) circle (1.5pt);
				\draw[color=black] (4.16,0.43) node {$x_{1,2}^3$};
				\draw [fill=black] (6,0) circle (1.5pt);
				\draw[color=black] (6.16,0.43) node {$x_{1,3}^3$};
				\draw [fill=black] (-2,3) circle (1.5pt);
				\draw[color=black] (-1.84,3.43) node {$x_{1,1}^1$};
				\draw [fill=black] (-4,3) circle (1.5pt);
				\draw[color=black] (-3.84,3.43) node {$x_{1,2}^1$};
				\draw [fill=black] (-6,3) circle (1.5pt);
				\draw[color=black] (-5.84,3.43) node {$x_{1,3}^1$};
				\draw [fill=black] (-2,1) circle (1.5pt);
				\draw[color=black] (-2,1.43) node {$x_{2,1}^1$};
				\draw [fill=black] (-4,1) circle (1.5pt);
				\draw[color=black] (-3.84,1.43) node {$x_{2,2}^1$};
				\draw [fill=black] (-6,1) circle (1.5pt);
				\draw[color=black] (-5.84,1.43) node {$x_{2,3}^1$};
				
				\draw [fill=black] (2,-2) circle (1.5pt);
				\draw[color=black] (2.16,-1.57) node {$x_{2,1}^3$};
				\draw [fill=black] (4,-2) circle (1.5pt);
				\draw[color=black] (4.16,-1.57) node {$x_{2,2}^3$};
				\draw [fill=black] (6,-2) circle (1.5pt);
				\draw[color=black] (6.16,-1.57) node {$x_{2,3}^3$};
				\draw [fill=black] (-2,0) circle (1.5pt);
				\draw[color=black] (-1.84,0.43) node {$x_{1,1}^4$};
				\draw [fill=black] (-4,0) circle (1.5pt);
				\draw[color=black] (-3.84,0.43) node {$x_{1,2}^4$};
				\draw [fill=black] (-6,0) circle (1.5pt);
				\draw[color=black] (-5.78,0.49) node {$x_{1,3}^4$};
				\draw [fill=black] (-2,-2) circle (1.5pt);
				\draw[color=black] (-1.9,-1.51) node {$x_{2,1}^4$};
				\draw [fill=black] (-4,-2) circle (1.5pt);
				\draw[color=black] (-3.78,-1.51) node {$x_{2,2}^4$};
				\draw [fill=black] (-6,-2) circle (1.5pt);
				\draw[color=black] (-5.78,-1.51) node {$x_{2,3}^4$};
			\end{scriptsize}
		\end{tikzpicture}
		\caption{Directed graph with one circuit and more than one branching vertex}
	\end{figure}\label{fig:Directed graph with one circuit}
	
	Let $\mathcal{F}$ be the set of all subsets of $X$ and $\mu(x)> 0$ ~for all~~ $ x \in X$. By using the directed graph described above, we  have  the corresponding parent function as follows:

	\begin{align*}
		par(x)= \left\{
		\begin{array}{ll}\
			x^r_{i,j}, & \mathrm{if}~~ x=x^r_{i,j+1} ~~~\mathrm{for}~~r\in J_{[1,\kappa]},~~~ i\in J_{[1,\eta_r]},~~ \mathrm{and}~~j\in \mathbb{N} ,\\\\
			x_r, & \mathrm{if}~~ x=x^s_{i,j}, ~~~\mathrm{for}~~ s\in J_{[1,\kappa]}  \mathrm{~and}~ \Phi_2(1+r)=\Phi_2(s+j),~  j\in\mathbb{N}, \\
			& i\in J_{[1,\eta_s]},~~ \mathrm{or}~~x=x_{\Phi_2(1+r)}.
		\end{array}\right. ,\end{align*} 
	where $\Phi_2:\mathbb{Z}\to J_{[1,\kappa]}$ is a function satisfies the following  condition with  $\kappa$ and a unique function $\Phi_1: \mathbb{Z}\to \mathbb{Z} $ as
	$
	p=\Phi_1(p)\kappa+\Phi_2(p),\quad p\in\mathbb{Z}.
	$
	The properties of $\Phi_1$  and $\Phi_2 $ are given below
	$
	\Phi_1(l\kappa+1)=\Phi_1(l\kappa+r),\quad l\in\mathbb{Z},~ r\in J_{[1,\kappa]},
	$ and 
	$
	\Phi_2(l\kappa+r_1+r_2)=\Phi_2(l\kappa+r_1)+r_2,~~ l\in\mathbb{Z}, ~~ r_1\in\mathbb{N},~ r_2\in\mathbb{Z}_+,~ r_1+r_2\in J_{[1,\kappa]}.  
	$
	

	\begin{equation}\label{equ1} 
		\left.
		\begin{aligned}
			\text{Assume that }   \varphi : X \to X \text{ is a  measurable transformation defined by} \\
			\varphi(x) = \operatorname{par}(x), \; x \in X \quad \quad \quad \quad \quad \quad \quad \quad \quad \quad \quad \quad\quad
		\end{aligned}
		\right\}
	\end{equation}
	
	Since the transformation $\phi$ is nonsingular,  $\phi^p$ is also nonsingular for $p \in \mathbb{N}$. So, the Radon-Nikodym derivative $h_p = \frac{d(\mu \circ \phi^{-p})}{d\mu}$ can be written as follows (see \cite{DSP}):
	$$h_p(x)= \left\{
	\begin{array}{ll}
		\frac{\mu(x^r_{i,j+p})}{\mu(x^r_{i, j})}, & \mathrm{if}~~ x=x^r_{i,j},  ~~ r\in J_{[1,\kappa]},  ~~  i\in J_{[1,\eta_r]}, \\
		& ~~j\in \mathbb{N},\\\\
		\frac{\mu(x_{\Phi_{2}(p+r)})
			+ \displaystyle\sum_{j=1}^{p}
			\sum_{\substack{s=1\\\Phi_{2}(p+r)=\Phi_{2}(s+j)}}^{\kappa}
			\sum_{i=1}^{\eta_{s}}
			\mu(x^{s}_{i,j})}{\mu(\{x_r\})},	 &\mathrm{if}~~ x=x_r,~~r\in J_{[1,\kappa]} .\\\\
	\end{array}\right.$$

	 In this note, we investigate the analyticity and subnormality of the Cauchy dual associated with the class of $k$-quasi-$m$-isometric composition operators acting on directed graphs containing a single circuit and multiple branching vertices. This study is motivated by the work of Jabłoński and Kośmider \cite{ZJJK } on $m$-isometric composition operators on directed graphs, as well as recent developments concerning the subnormality of Cauchy duals of $2$-isometries\cite{MB}. We have investigated these concepts and extended them in two directions: first, to a broader graph-theoretic setting, and second, to higher classes of $m$-isometries. These extensions are established through Theorems \ref{Thm1}, \ref{Tm2}, \ref{Tm3}, \ref{Tm4}, and \ref{Tm5}. Furthermore, in the case $\kappa=1$, Example \ref{eg1} demonstrates that the analyticity of the composition operator $C_\phi$ does not necessarily entail its $1$-quasi-$2$-isometry, while Example \ref{eg2} reveals that a  quasi-$2$-isometric composition operator $C_\phi$ need not satisfy the $\Delta_{C_\phi}$-regularity condition. Moreover, for $\kappa=2$, Example \ref{eg3} establishes that the kernel condition associated with $C_\phi$ is insufficient to ensure $\Delta_{C_\phi}$-regularity. Collectively, these examples highlight the subtle distinctions among analyticity, quasi-$2$-isometry, kernel conditions, and regularity properties.

\section{Subnormality of Cauchy dual of composition operators on directed graphs }
	We begin this secion by studying  the analyticity of  $k$-quasi-$m$-isometric  composition operators on directed graphs. First we extend the \cite[Proposition 4.2]{ZJJK } by using the following  the extended version of  \cite[Lemma 4.1]{ZJJK }.
	
 \begin{lemma}\label{lm1}
Assume that \eqref{equ1} holds, $C_\phi$ is bounded operator on $L^2(\mu)$ and  $f\in L^2(\mu)$. Then the following statements are equivalent:
  \begin{enumerate}
  	\item $f \in \mathcal{R}^\infty(C_\phi),$
  	\item $f(x_r)= f(x^s_{i, j})$~~ for all ~~~ $r, s \in J_{[1, \kappa]},~~ j\in\mathbb{N},~~ r= \Phi_{2}(s+j), ~~ i \in J_{[1, \eta_{s}]},$
  	\item $ C^{\kappa}_{\phi} f = f .$
  \end{enumerate}
 \end{lemma}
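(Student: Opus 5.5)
I will prove the cycle of implications (i)$\Rightarrow$(ii)$\Rightarrow$(iii)$\Rightarrow$(i). The whole argument rests on the structure of $\phi=\operatorname{par}$. On the circuit, $\phi$ is a cyclic permutation of $X_\kappa$ of period $\kappa$, so $\phi^{\kappa}(x_r)=x_r$. On a branch, $\phi^{p}(x^s_{i,j})=x^s_{i,j-p}$ for $p<j$. The vertex $x^s_{i,j}$ reaches the circuit after $j$ steps, at the vertex $x_{\Phi_2(s+j)}$ in the indexing used in (ii), and from then on it cycles around the circuit. Consequently, for every $x\in X$ and every $n$ at least the distance of $x$ to the circuit, $\phi^{n}(x)$ lies on the circuit. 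I will first record these formulas for $\phi^{n}$ explicitly, using the identities for $\Phi_1,\Phi_2$, so that each implication reduces to pointwise bookkeeping.

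\emph{(ii)$\Rightarrow$(iii).} I compute $(C_\phi^{\kappa}f)(x)=f(\phi^{\kappa}(x))$ pointwise. For $x=x_r$ this equals $f(x_r)$ by periodicity. For $x=x^s_{i,j}$ with $j>\kappa$ it equals $f(x^s_{i,j-\kappa})$. Condition (ii) says that $f$ takes the same value on each branch vertex as on the circuit vertex $x_r$ with $r=\Phi_2(s+j)$, and replacing $j$ by $j-\kappa$ does not change $\Phi_2(s+j)$. Hence $f(x^s_{i,j-\kappa})=f(x_{\Phi_2(s+j)})=f(x^s_{i,j})$. For $j\le\kappa$, $\phi^{\kappa}(x^s_{i,j})$ is a circuit vertex, namely $\phi^{\kappa-j}$ applied to the landing vertex. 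Checking with the $\Phi_2$ arithmetic that this vertex is exactly $x_{\Phi_2(s+j)}$ then gives $f(\phi^{\kappa}(x))=f(x)$ by (ii).

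\emph{(iii)$\Rightarrow$(i).} If $C_\phi^{\kappa}f=f$, then $f=C_\phi^{n\kappa}f$ for every $n$. Hence $f=C_\phi^{m}\bigl(C_\phi^{n\kappa-m}f\bigr)\in C_\phi^{m}(L^2(\mu))$ for every $m$ (take $n\kappa\ge m$), so $f\in\mathcal R^\infty(C_\phi)$.

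\emph{(i)$\Rightarrow$(ii).} This is the main step. Given $f\in\mathcal R^\infty(C_\phi)$ and any $p$, write $f=g_p\circ\phi^{p}$ with $g_p\in L^2(\mu)$. Then $f$ is constant on the fibres of $\phi^{p}$: if $\phi^{p}(x)=\phi^{p}(y)$, then $f(x)=f(y)$. To compare $x^s_{i,j}$ with a circuit vertex, I choose $p=j+\ell\kappa$ for a suitable $\ell\ge 0$. Then $\phi^{p}(x^s_{i,j})$ is the landing vertex advanced by $\ell\kappa$ steps, which is the landing vertex itself. I need a circuit vertex $x_r$ with $\phi^{p}(x_r)$ equal to this same point. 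Since $\phi$ is a bijection on the circuit, such a vertex exists and is unique. The delicate point is checking that this $r$ is the one prescribed by the paper's convention $r=\Phi_2(s+j)$. Equivalently, I must verify that $x^s_{i,j}$ and $x_r$ lie in the same fibre of $\phi^{j}$, that is, that $\phi^{j}(x^s_{i,j})=\phi^{j}(x_r)$ for exactly this $r$. I expect this index verification, with the $\Phi_1,\Phi_2$ formalism and the parent-function convention, to be the main obstacle. Conceptually, it says that $x^s_{i,j}$ and $x_r$ are at the same ``phase'' relative to the circuit. Once it is settled, (ii) follows, and the cycle closes. Boundedness of $C_\phi$ is used only so that these operators and their powers act on $L^2(\mu)$; no norm estimates are needed.
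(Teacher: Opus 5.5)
Your cycle (i)$\Rightarrow$(ii)$\Rightarrow$(iii)$\Rightarrow$(i) follows the paper's proof step for step. For the first implication you use that $f=g_p\circ\phi^p$ is constant on fibres of $\phi^p$; for the second, a pointwise evaluation of $f\circ\phi^\kappa$; for the third, $C_\phi^{\kappa}$-periodicity of $f$. Your $f=C_\phi^{m}(C_\phi^{n\kappa-m}f)$ is a tidier form of the paper's choice $g=f\circ\phi^{\kappa-r}$.

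There is one error you need to fix. Under the parent function, $\operatorname{par}(x^s_{i,1})=x_s$ and $\operatorname{par}(x_{\Phi_2(1+r)})=x_r$. So $x^s_{i,j}$ reaches the circuit after $j$ steps at $x_s$, not at $x_{\Phi_2(s+j)}$ as you state. Taken literally, your landing vertex would make both of your deferred index checks fail. For example, it would give $\phi^{\kappa}(x^s_{i,j})=\phi^{\kappa-j}(x_{\Phi_2(s+j)})=x_{\Phi_2(s+2j)}$ for $j\le\kappa$.

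With the correct landing vertex, the step you call the main obstacle is one line. Since $\phi$ lowers circuit indices by one modulo $\kappa$, we have $\phi^{j}(x^s_{i,j})=x_s=\phi^{j}(x_{\Phi_2(s+j)})$. Taking $p=j$ (no $\ell\kappa$ is needed) then gives $f(x^s_{i,j})=g_j(x_s)=f(x_{\Phi_2(s+j)})$. Likewise, for $j\le\kappa$, $\phi^{\kappa}(x^s_{i,j})=\phi^{\kappa-j}(x_s)=x_{\Phi_2(s+j-\kappa)}=x_{\Phi_2(s+j)}$. This is exactly what the paper reads off from its explicit formula: $\phi^p(x^s_{i,j})=x_r$ if and only if $\Phi_2(p+r)=\Phi_2(s+j)$ for $j\le p$.
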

\begin{proof}
	Assume that \eqref{equ1} is true and $C_\phi \in B(L^2(\mu))$ and let $f\in L^2(\mu)$.
	Then for $p\geq 1 $, we have 
	\begin{align*}
		\phi^p(x)= \left\{
		\begin{array}{ll}
			x^r_{i,j}, & \mathrm{if}~~ x=x^r_{i,j+p} ~~~\mathrm{for}~~r\in J_{[1,\kappa]},~~~ i\in J_{[1,\eta_r]},~~j\in \mathbb{N} ,\\\\
			x_r, & \mathrm{if}~~x=x_{\Phi_2(p+r)} ~~ \mathrm{or}~~  x=x^s_{i,j},~~ r, s\in J_{[1,\kappa]},  \\
			& \Phi_2(p+r)=\Phi_2(s+j),~  j\in J_{[1,p]}, ~~i\in J_{[1,\eta_s]}.
		\end{array}\right. \end{align*} 
	(i)$\implies$ (ii). Assume that $f \in \mathcal{R}^\infty(C_\phi)$. Then for each $p\geq 1$ there exists a function $g_p \in L^2(\mu)$ such that $f=g_p\circ \phi^p$.  Thus for $p\geq 1$ we get,
	$$f(x_{\Phi_{2}(p+r)})= f(x^s_{i, j}) ~~\textrm{for all} ~~ r, s \in J_{[1, \kappa]},~~ ~~ \Phi_{2}(p+r)= \Phi_{2}(s+j),~~ j\in J_{[1,p]},~~ i \in J_{[1, \eta_{s}]}.$$
	Hence we get , 
	$$f(x_r)= f(x^s_{i, j})~~~ \textrm{for all}~~ r, s \in J_{[1, \kappa]},~~ ~~ r = \Phi_{2}(s+j),~~ j\in \mathbb{N},~~ i \in J_{[1, \eta_{s}]}.$$

(ii)$\implies$ (iii). For $x\in X$, consider $(C^{\kappa}_{\phi} f)(x) = (f\circ \phi^\kappa)(x) $
	
	\begin{align*}
		(f\circ \phi^\kappa)(x)= \left\{
		\begin{array}{ll}
			f(x^r_{i,j}), & \mathrm{if}~~ x=x^r_{i,j+\kappa} ~~r\in J_{[1,\kappa]},~~~ i\in J_{[1,\eta_r]},~~j\in \mathbb{N} ,\\\\
			f(x_r), & \mathrm{if}~~x=x_{\Phi_2(\kappa+r)} ~~ \mathrm{or}~~  x=x^s_{i,j},~~ r, s\in J_{[1,\kappa]}  \mathrm,   \\
			& \Phi_2(\kappa+r)=\Phi_2(s+j),~  j\in J_{[1,\kappa]}, ~~i\in J_{[1,\eta_s]}.
		\end{array}\right. \end{align*} 
	Since $\Phi_2(\kappa+r)=r$ and $\Phi_2(l\kappa+r)=r ~~ \text{for}~~ l \in \mathbb{Z_+}, r \in J_{[1, \kappa]}$,  by (ii)  it follows that  $C^{\kappa}_{\phi} f =f. $
	
	(iii)$\implies$ (i). Let $p\in \mathbb{N}$. Then $p= l\kappa+r$ for $  ~~l \in \mathbb{Z_+}$ and $ ~~ r \in J_{[1, \kappa-1]}.$ 

	Now, $C^{p}_{\phi} f = C^{l\kappa+r}_{\phi} f = C^{l\kappa}_{\phi}C^{r}_{\phi}(f)= (C^{r}_{\phi})(f)$. Therefore, if we choose $g= f\circ \phi^{\kappa-r}$, then  we get 
 $$C^{p}_{\phi}(g)= C^{r}_{\phi}(f\circ \phi^{\kappa-r})= f\circ \phi^{\kappa}= C^{\kappa}_{\phi}f = f.$$ \\
 Which  implies $f \in \mathcal{R}^p(C_\phi), ~~ p \in \mathbb{N}.$
 Hence $f \in \mathcal{R}^\infty(C_\phi)$.
\end{proof}
\begin{theorem}\label{Thm1}
	Assume that \eqref{equ1} holds and  $C_\phi$ is a bounded operator on $L^2(\mu)$. 
	Then $C_\phi$ is analytic if and only if the series  
  $$\displaystyle\sum_{j=1}^{\infty}
 \sum_{\substack{s=1\\ r=\Phi_{2}(s+j)}}^{\kappa}
 \sum_{i=1}^{\eta_{s}}
 \mu(x^{s}_{i,j})$$ 
 is divergent for every $ r \in J_{[1, \kappa]}.$
		
\end{theorem}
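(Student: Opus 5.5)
By Lemma \ref{lm1}, $C_\phi$ is analytic, i.e.\ $\mathcal{R}^\infty(C_\phi)=\{0\}$, if and only if the only $f\in L^2(\mu)$ satisfying condition (ii) of that lemma is $f=0$. The plan is therefore to describe the space $\mathcal{R}^\infty(C_\phi)$ explicitly as the set of $L^2$ functions that are constant on certain disjoint ``level classes'', and then read off when this space is trivial.

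For each $r\in J_{[1,\kappa]}$ set
\[
Y_r=\{x_r\}\cup\{x^s_{i,j}: s\in J_{[1,\kappa]},\ j\in\mathbb N,\ r=\Phi_2(s+j),\ i\in J_{[1,\eta_s]}\}.
\]
Each vertex $x^s_{i,j}$ has exactly one value $\Phi_2(s+j)$, so the sets $Y_1,\dots,Y_\kappa$ are pairwise disjoint and their union is $X$. Condition (ii) says precisely that $f$ is constant on each $Y_r$, with value $f(x_r)$. Hence
\[
\mathcal{R}^\infty(C_\phi)=\Big\{\sum_{r=1}^{\kappa}c_r\chi_{Y_r}: c_r\in\mathbb C\Big\}\cap L^2(\mu).
\]
Since the $Y_r$ are disjoint,
\[
\Big\|\sum_r c_r\chi_{Y_r}\Big\|^2=\sum_r |c_r|^2\mu(Y_r),\qquad
\mu(Y_r)=\mu(x_r)+\sum_{j=1}^{\infty}\sum_{\substack{s=1\\ r=\Phi_2(s+j)}}^{\kappa}\sum_{i=1}^{\eta_s}\mu(x^s_{i,j}).
\]
Because $\mu(x_r)<\infty$, $\mu(Y_r)$ is finite exactly when the series in the statement (for that $r$) converges.

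With this description both directions are short.
\begin{itemize}
\item If the series diverges for every $r$, then any $f\in\mathcal{R}^\infty(C_\phi)$ has $\|f\|^2=\sum_r |f(x_r)|^2\mu(Y_r)<\infty$, which forces $f(x_r)=0$ for all $r$. Hence $f=0$ and $C_\phi$ is analytic.
\item If the series converges for some $r_0$, then $\chi_{Y_{r_0}}$ is a nonzero element of $L^2(\mu)$ satisfying (ii). By Lemma \ref{lm1} it lies in $\mathcal{R}^\infty(C_\phi)$, so $C_\phi$ is not analytic.
\end{itemize}

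The main obstacle is the bookkeeping behind the partition. One must check that every non-circuit vertex lies in exactly one $Y_r$, and that the constraint in (ii), namely $r=\Phi_2(s+j)$ with $j$ ranging over all of $\mathbb N$, matches the indexing of the series in the theorem. Both points follow from $\Phi_2$ being a well-defined function into $J_{[1,\kappa]}$.

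One might worry that a finite-measure class could be ``glued'' to an infinite one through the circuit and thereby be forced to vanish. Condition (ii) imposes no relation between $f(x_r)$ for different $r$, so the classes are independent and no such gluing occurs. This independence is consistent with Lemma \ref{lm1}(iii): $C_\phi^\kappa f=f$ only links vertices at distances that are multiples of $\kappa$.
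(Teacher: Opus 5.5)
Your proof is correct and is essentially the paper's argument. The paper likewise uses Lemma \ref{lm1} to show that the indicator of $\{x_t\}\cup\{x^s_{i,j}: t=\Phi_2(s+j)\}$ is a nonzero element of $L^2(\mu)\cap\mathcal{R}^\infty(C_\phi)$ when the $t$-th series converges, and for the converse it writes $f$ as a combination of the indicators of these disjoint classes and uses the resulting norm formula to force $f(x_r)=0$ for every $r$; your version, which states the partition $X=Y_1\cup\dots\cup Y_\kappa$ and the explicit description of $\mathcal{R}^\infty(C_\phi)$ up front (and correctly has $|f(x_r)|^2$ in the norm), is a cleaner organization of the same proof.
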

	
\begin{proof}
	Suppose  that \eqref{equ1} holds and $C_\phi$ is a analytic operator on $L^2(\mu)$. Then $\mathcal{R}^{\infty}(C_\phi)$ is a trivial space.  Let 
	$\displaystyle\sum_{j=1}^{\infty}
	\sum_{\substack{s=1\\ r=\Phi_{2}(s+j)}}^{\kappa}
	\sum_{i=1}^{\eta_{s}}
	\mu(x^{s}_{i,j})$ 
	is convergent for some $ t \in J_{[1, \kappa]}.$
	
	 Consider  the  function $ f_t: X \rightarrow \mathbb{C}$  given by 
		\begin{align*}
		f_t(x)= \left\{
		\begin{array}{ll}
			1, & \mathrm{if},x=x_t ~~ \mathrm{or}~~  x=x^s_{i,j}, ~~  s\in J_{[1,\kappa]}     \\
			& t=\Phi_2(s+j),~  j\in \mathbb{N}, ~~i\in J_{[1,\eta_s]}.\\\\
			0, & otherwise.
		\end{array}\right. \end{align*} 
	Then by Lemma \ref{lm1} $f_t \in \mathcal{R}^{\infty}(C_\phi)$ and it is a nonzero function  whose support is 
	$$
		supp(f_t)=\overline{\{x\in X : f_t(x)\neq 0\}}= \{x_{t}\} \cup\displaystyle\bigcup_{j=1}^{\infty}
		\bigcup_{\substack{s=1\\ t =\Phi_{2}(s+j)}}^{\kappa}
		\bigcup_{i=1}^{\eta_{s}}
		\{x^{s}_{i,j}\}.
	$$
	Thus, 
	$$\int_{X} |f_t|^2 d\mu = \mu(x_t)+\displaystyle\sum_{j=1}^{\infty}
	\sum_{\substack{s=1\\ t=\Phi_{2}(s+j)}}^{\kappa}
	\sum_{i=1}^{\eta_{s}}
	\mu(x^{s}_{i,j}) < \infty. $$
	Therefore, $f_t \in L^2(\mu) \cap \mathcal{R}^{\infty}(C_\phi)$ , a contradiction to analyticity of $C_\phi .$
	
	Conversely Suppose That $\displaystyle\sum_{j=1}^{\infty}
	\sum_{\substack{s=1\\ r=\Phi_{2}(s+j)}}^{\kappa}
	\sum_{i=1}^{\eta_{s}}
	\mu(x^{s}_{i,j})$ 
	is divergent for every $ r \in J_{[1, \kappa]}.$
	Define a function $f: X \rightarrow \mathbb{C}$ by $ f= \displaystyle \sum_{r=1}^{\kappa}f(x_r)f_r$,  where  $f_r$ is the  characteristic functions of the set  $\{x_{r}\} \cup\displaystyle\bigcup_{j=1}^{\infty}
	\bigcup_{\substack{s=1\\ t =\Phi_{2}(s+j)}}^{\kappa}
	\bigcup_{i=1}^{\eta_{s}}
	\{x^{s}_{i,j}\}$ for $r \in J_{[1, \kappa]}$ and $supp(f_r) \cup supp(f_t)= \emptyset, ~~\textrm{for}~~ r\neq t \in J_{[1, \kappa]}.$ Then by Lemma \ref{lm1} $f \in \mathcal{R}^{\infty}(C_\phi) \cap L^2(\mu)$. 
	But note that 
	$$\int_{X} |f|^2 d\mu = \displaystyle \sum_{r=1}^{\kappa} f(x_r)\left[\int_{X} |f_r|^2 d\mu\right] = \displaystyle \sum_{r=1}^{\kappa} f(x_r)\left[\mu(x_r)+\sum_{j=1}^{\infty}
	\sum_{\substack{s=1\\ r=\Phi_{2}(s+j)}}^{\kappa}
	\sum_{i=1}^{\eta_{s}}
	\mu(x^{s}_{i,j})\right] < \infty. $$
	Then by assumption $f(x_r)=0~~~ \textrm{for all}~~ r \in J_{[1, \kappa]}$. Consequently,  $f(x^{s}_{i,j})= 0 ~~ \textrm{for all}~~ s \in J_{[1, \kappa]}, ~~j \in \mathbb{N},$ and $~~i \in J_{[1, \eta_s]}.$ Therefore, $ \mathcal{R}^{\infty}(C_\phi)= \{0\}$. Hence $C_\phi$ is analytic.
	
\end{proof}

\begin{corollary}\label{Cr1}
	Let $k \in \mathbb{Z_+}$ and  $m\geq 2$. Assume that \eqref{equ1} holds and $C_\phi \in B(L^2(\mu)) $ is a $k$-quasi-$m$-isometry. Then $C_\phi $ is analytic.
	
\end{corollary}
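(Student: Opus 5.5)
The plan is to verify the divergence criterion of Theorem \ref{Thm1} by contradiction, using the polynomial growth of orbits of a $k$-quasi-$m$-isometry. First, the defining identity $C_\phi^{*k}\mathcal{B}_m(C_\phi)C_\phi^k=0$ gives, for every $f\in L^2(\mu)$,
$$\sum_{j=0}^{m}(-1)^j\binom{m}{j}\|C_\phi^{\,m-j+k}f\|^2=0 .$$
Applied to $C_\phi^{\,n}f$ in place of $f$, this says that the sequence $a_n=\|C_\phi^{\,n+k}f\|^2$ has vanishing $m$-th forward difference for all $n\in\mathbb Z_+$. Hence $a_n$ is a polynomial in $n$ of degree at most $m-1$. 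Take $f=\chi_{\{x_t\}}$ for a fixed $t\in J_{[1,\kappa]}$. Since $\|C_\phi^{\,p}f\|^2=\int h_p|f|^2\,d\mu=h_p(x_t)\mu(x_t)$, the map $p\mapsto h_p(x_t)$ coincides with a polynomial $P_t(p)$ for $p\ge k$.

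Now suppose the series in Theorem \ref{Thm1} converges for some $r\in J_{[1,\kappa]}$. Fix any $t$ and choose $c$ with $\Phi_2(c+t)=r$. Along the arithmetic progression $p=l\kappa+c$ we have $\Phi_2(p+t)=r$ for all $l$. By the formula for $h_p(x_r)$ recalled above (with $r$ replaced by $t$),
$$h_{l\kappa+c}(x_t)=\frac{1}{\mu(x_t)}\Big(\mu(x_r)+\sum_{j=1}^{l\kappa+c}\sum_{\substack{s=1\\ r=\Phi_2(s+j)}}^{\kappa}\sum_{i=1}^{\eta_s}\mu(x^s_{i,j})\Big).$$
This is a nondecreasing sequence in $l$, bounded by the convergent series. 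On the other hand, $l\mapsto P_t(l\kappa+c)$ is a polynomial in $l$ for $l$ large. A bounded polynomial on $\mathbb N$ is constant, so consecutive terms coincide. The difference between the terms with index $l+1$ and $l$ is $\mu(x_t)^{-1}$ times the sum of $\mu(x^s_{i,j})$ over $j\in(l\kappa+c,(l+1)\kappa+c]$, $s$ with $\Phi_2(s+j)=r$, and $i\in J_{[1,\eta_s]}$.

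The main point to check is that this block of the series is strictly positive, which yields the contradiction. Pick $s_0$ with $\eta_{s_0}\ge1$, which is possible since at least one $\eta_s$ is nonzero. As $j$ runs over $\kappa$ consecutive integers, $\Phi_2(s_0+j)$ runs over all of $J_{[1,\kappa]}$. Hence some such $j$ satisfies $\Phi_2(s_0+j)=r$, and the term $\mu(x^{s_0}_{1,j})>0$ appears in the block. This contradicts constancy, so the series diverges for every $r$, and Theorem \ref{Thm1} gives that $C_\phi$ is analytic.

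The only delicate bookkeeping is the indexing with $\Phi_2$: checking that $\Phi_2(l\kappa+c+t)$ is independent of $l$, and that a window of $\kappa$ consecutive $j$ meets the residue class $r$. The hypothesis $m\ge2$ is not essential in this argument; the case $m=1$ is also covered.
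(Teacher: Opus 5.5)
Your proof is correct, but it takes a different route from the paper's.

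The paper's proof invokes the structure theorem \cite[Theorem 2.6]{DSP}. For a $k$-quasi-$m$-isometry with $m\ge 2$, that theorem says the branch masses $\{\mu(x^r_{i,k+j+1})\}_{j\ge 0}$ form a polynomial in $j$ of degree at most $m-2$. Since these values are strictly positive, the polynomial cannot tend to $0$. Hence the terms of each series in Theorem \ref{Thm1} do not tend to zero, the series diverges, and analyticity follows.

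You work only at the circuit vertices. From the defining identity you get that $p\mapsto h_p(x_t)$ is eventually a polynomial. Along the progression $p=l\kappa+c$, the quantity $\mu(x_t)h_p(x_t)-\mu(x_r)$ is exactly the $p$-th partial sum of the $r$-series. Convergence would therefore make this polynomial bounded, hence constant, while each block of $\kappa$ consecutive indices adds a strictly positive amount. Your $\Phi_2$ bookkeeping and the positivity of each block (via some $s_0$ with $\eta_{s_0}\ge 1$) are handled correctly.

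What your route buys:
\begin{itemize}
\item It is self-contained. It uses only the moment identity and the explicit formula for $h_p$ at the $x_t$, not the full characterization from \cite{DSP}.
\item As you note, it does not need $m\ge 2$. For $m=1$ the same block argument in fact shows the hypothesis can never hold, since the sequence $h_{l\kappa+c}(x_t)$ is always strictly increasing in $l$.
\end{itemize}

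What the paper's route buys: it is shorter once the cited characterization is available. It also gives the more concrete information that the individual branch masses do not tend to zero, so the divergence is termwise rather than only at the level of partial sums.
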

\begin{proof}
	Assume that $C_\phi \in B(L^2(\mu)) $ is a $k$-quasi-$m$-isometry satisfying \eqref{equ1}, for $k \in \mathbb{Z_+}$ and  $m\geq 2$. Then by \cite[Theorem 2.6]{DSP}, 
	$\{\mu(x^r_{i,k+j+1})\}_{j=0}^{\infty}$ 
is a polynomial in $j$ of degree at most $m-2$ for all $r\in J_{[1,\kappa]}, ~~i\in J_{[1,\eta_{r}]}$, and
	$$
	\sum_{p=0}^{m} (-1)^p \binom{m}{p}
	\, h_{p+k}(x_r)=0~~\textrm{for all} ~~ r\in  J_{[1,\kappa]}.
	$$
Then $\mu(x^r_{{i,k+j+1}}) $  doesnot converge to zero as $j\rightarrow \infty, ~~r \in J_{[1,\kappa]}, ~~i\in J_{[1,\eta_{r}]}. $	 This gives the divergence of the series
 $\sum_{j=1}^{\infty}
\sum_{\substack{s=1\\ r=\Phi_{2}(s+j)}}^{\kappa}
\sum_{i=1}^{\eta_{s}}
\mu(x^{s}_{i,j})$  for every $r \in J_{[1,\kappa]}$. Then by Theorem \ref{Thm1} we get $C_\phi$ is analytic.
\end{proof}
The following example shows that converse of Corollory \ref{Cr1} is not true. 
\begin{example}\label{eg1}
	Assume that \eqref{equ1} holds and $C_\phi \in \mathcal{B}(L^2(\mu))$. Define 
	\begin{align*}
		\mu(x)= \left\{
		\begin{array}{ll}
			1, & \mathrm{if},~~~ x=x_r, \quad   r \in J_{[1,\kappa]}   \\
			\displaystyle\frac{s^i}{j},& if, ~~ x= x^s_{i, j}, \quad s \in J_{[1,\kappa]}, ~~~i \in J_{[1,\eta_{s}]}, ~~~j \in \mathbb{N}. \\\\
		\end{array}\right. \end{align*} 
	Then the series 
	 $\displaystyle\sum_{j=1}^{\infty}
	\sum_{\substack{s=1\\ r=\Phi_{2}(s+j)}}^{\kappa}
	\sum_{i=1}^{\eta_{s}}
	\mu(x^{s}_{i,j})$
	is divergent for every $ r \in J_{[1, \kappa]}.$ Therefore, by applying Theorem \ref{Thm1} we have, $C_\phi$ is analytic. On the other hand, $C_\phi$ is not a $k$-quasi-$m$-isometry for any $k \in \mathbb{Z_+}$ and $m\geq2$. Indeed, since $\mu(x^{s}_{i,j}) \longrightarrow 0$ as $j \longrightarrow \infty $ for $s \in J_{[1,\kappa]}, ~~~i \in J_{[1,\eta_{s}]}$, the sequence $\{\mu(x^r_{{i,k+j+1}})\}_{j=0}^{\infty}$ cannot be a polynomial in $j$ of degree at most $m-2$ for any $r\in J_{[1,\kappa]}$ and $i\in J_{[1,\eta_{r}]}.$ Hence, by \cite[Theorem 2.6]{DSP}, $C_\phi$ fails to be a $k$-quasi-$m$-isometry.
	
\end{example}

The following lemma shows that the Cauchy dual of a composition operator $C_\phi \in B(L^2(\mu))$ satisfying \eqref{equ1} itself a weighted composition operator. 

\begin{lemma}\label{lm2}
Assume that \eqref{equ1} holds and $C_\phi \in B(L^2(\mu))$ is left invertible, then the Cauchy dual $C^{'}_\phi$ of $C_\phi$ is a weighted composition operator defined by  
	$$C^{'}_\phi = w_\phi C_\phi, $$ where $w_\phi = \frac{1}{h\circ \phi} \in L^{\infty}(\mu)$.
\end{lemma}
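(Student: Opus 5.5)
We use the standard identity $C_\phi^*C_\phi = M_h$ for a bounded composition operator. It holds on any $\sigma$-finite space, and it is immediate in our discrete setting. Indeed, for $f,g\in L^2(\mu)$ we have
$$\langle C_\phi f, C_\phi g\rangle=\int_X (f\bar g)\circ\phi \, d\mu=\int_X f\bar g \, d(\mu\circ\phi^{-1})=\int_X h f\bar g\, d\mu ,$$
so $C_\phi^*C_\phi=M_h$.

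Next, left invertibility of $C_\phi$ means exactly that $C_\phi^*C_\phi$ is invertible. Hence $M_h$ is invertible, and this forces $h\ge c>0$ on $X$ for some constant $c$: test $M_h$ against the normalized characteristic functions $\chi_{\{x\}}/\sqrt{\mu(x)}$, which gives $h(x)=\|C_\phi \chi_{\{x\}}\|^2/\mu(x)\ge \|(C_\phi^*C_\phi)^{-1}\|^{-1}$. Therefore $(C_\phi^*C_\phi)^{-1}=M_{1/h}$ with $1/h\in L^\infty(\mu)$. In our graph, $h$ is given by the explicit formula with $p=1$; on every vertex it is a ratio of positive masses, so it is strictly positive pointwise. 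The bound from below, however, comes from the left invertibility.

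Now we compute the Cauchy dual directly:
$$C_\phi' = C_\phi (C_\phi^*C_\phi)^{-1}= C_\phi M_{1/h}.$$
The key commutation is $C_\phi M_\pi = M_{\pi\circ\phi} C_\phi$, which follows from $(\pi f)\circ\phi=(\pi\circ\phi)(f\circ\phi)$. Applying it with $\pi=1/h$ gives $C_\phi' f=\frac{1}{h\circ\phi}(f\circ\phi)$, that is, $C_\phi'=w_\phi C_\phi$ with $w_\phi=1/(h\circ\phi)$. This is a weighted composition operator in the sense of the introduction, with weight $\pi=w_\phi$.

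Finally, $\|w_\phi\|_\infty\le \sup_X 1/h\le 1/c<\infty$, so $w_\phi\in L^\infty(\mu)$. The only step requiring care is justifying that $1/h$ is bounded, not merely that it exists pointwise. This is handled by the invertibility of $M_h$ as above; one does not need the explicit formula for $h_p$, which only confirms positivity.
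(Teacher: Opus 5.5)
Your proof is correct and follows the same route as the paper. The paper gives no argument of its own and only refers to the standard computation in its cited references, which is exactly yours: $C_\phi^*C_\phi=M_h$, so $C_\phi'=C_\phi M_{1/h}=M_{1/h\circ\phi}C_\phi$, with left invertibility forcing $h\ge c>0$ and hence $w_\phi\in L^\infty(\mu)$.
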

	\begin{proof}
		The reqired result follows by a similar argument as in 	(See \cite{ZJJK ,RKS}) and by using \eqref{equ1}.
	\end{proof}
	\begin{remark}
	If $w_\phi = \frac{1}{h\circ \phi}$ is essentially bounded with respect to $\mu$ as in \eqref{equ1}, then $\mu_{w_\phi}(x)= |w_{\phi}(x)|^2 \mu (x), \quad x\in X $ is positive measure on $X$ and 
	\begin{align}\label{eqn2}
	\widehat{w}_n= \left\{
	\begin{array}{ll}
		1 & n=0 \\
		\prod_{j=0}^{n-1} w \circ \phi^{j} & n\in \mathbb{Z_+}
	\end{array}\right.
	\end{align} 
	
is a sequence of positive numbers. 
	\end{remark}
	\begin{lemma}\label{lm3}
		If \eqref{equ1} holds and $C_\phi \in B(L^2(\mu))$ is quasi-$2$-isometry, then $C_\phi $ is left invertible and its Cauchy dual $C^{'}_\phi$ is a weighted composition operator. Furthermore,  $C^{'}_\phi$ is subnormal if and only if 
		$\left\{\lVert {C^{'}}^n_\phi f\rVert^2 \right\}_{n=0}^{\infty}$ is a Stieltjes moment sequence.
	\end{lemma}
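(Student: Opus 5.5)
Our plan has three parts: left invertibility, identification of the Cauchy dual, and the moment characterization of subnormality.

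\textbf{Left invertibility.} Recall that $C_\phi$ is left invertible exactly when $C_\phi^*C_\phi = M_h$ is invertible. Since $\|C_\phi f\|^2=\int_X h|f|^2\,d\mu$, this is equivalent to $h$ being bounded below by a positive constant on $X$. We will verify this by cases, using the quasi-$2$-isometry hypothesis through \cite[Theorem 2.6]{DSP} with $k=1$, $m=2$.

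\emph{Branch vertices.} That theorem gives that $\{\mu(x^r_{i,j+2})\}_{j=0}^\infty$ is a polynomial in $j$ of degree at most $0$. Hence $\mu(x^r_{i,j})$ is constant, say equal to $c_{r,i}>0$, for $j\ge 2$. It follows that $h(x^r_{i,j})=\mu(x^r_{i,j+1})/\mu(x^r_{i,j})=1$ for $j\ge2$. At $j=1$ we get $h(x^r_{i,1})=c_{r,i}/\mu(x^r_{i,1})>0$. There are only finitely many such points when every $\eta_r<\infty$.

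\emph{Circuit vertices.} Here $h(x_r)\ge \mu(x_{\Phi_2(1+r)})/\mu(x_r)>0$, and there are finitely many $x_r$.

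\emph{The main obstacle.} The difficulty arises if some $\eta_r=\infty$. In that case infinitely many values $h(x^r_{i,1})$ appear, and we must show they stay away from zero. I would first try to extract this from the condition $\sum_p(-1)^p\binom2p h_{p+1}(x_r)=0$ together with boundedness of $h$. Boundedness forces $\sum_i c_{r,i}<\infty$, and the circuit identity then bounds the tails. If this fails, the fallback is to use the quasi-$2$-isometric identity $\|C_\phi^3f\|^2-2\|C_\phi^2f\|^2+\|C_\phi f\|^2=0$. Written as $h_3-2h_2+h=0$, it gives
$$h=2h_2-h_3 .$$
At $x=x^r_{i,1}$ this yields $\mu(x^r_{i,2})=2\mu(x^r_{i,3})-\mu(x^r_{i,4})=c_{r,i}$, but it gives no information on $\mu(x^r_{i,1})$. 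So a genuine lower bound may require an extra assumption, or else the observation that the paper's setting implicitly has $\eta_r$ finite. I would state that reduction explicitly.

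\textbf{Cauchy dual.} Once $M_h$ is invertible, Lemma \ref{lm2} gives $C'_\phi=w_\phi C_\phi$ with $w_\phi=1/(h\circ\phi)\in L^\infty(\mu)$. This is a weighted composition operator, with powers $C'^n_\phi f=\widehat w_n\,(f\circ\phi^n)$ as in \eqref{eqn2}.

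\textbf{Subnormality.} The last assertion is Lambert's criterion: a bounded operator $T$ is subnormal if and only if $\{\|T^nf\|^2\}_{n\ge0}$ is a Stieltjes moment sequence for every $f$. We apply it to $T=C'_\phi$. The forward direction follows from the spectral representation of the normal extension $N$: $\|T^nf\|^2=\int|z|^{2n}\,d\langle E f,f\rangle$, which after the substitution $t=|z|^2$ is a Stieltjes moment sequence. The converse is Lambert's theorem. No graph structure is needed here beyond the boundedness of $C'_\phi$ already established.
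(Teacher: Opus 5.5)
Your outline matches the paper's proof, which is only two lines. You get left invertibility from the structure of $h$, then $C'_\phi=w_\phi C_\phi$ from Lemma~\ref{lm2}, then a moment criterion for subnormality. The one difference is in the last step. You invoke Lambert's theorem directly, which needs nothing beyond boundedness of $C'_\phi$ and gives exactly the statement as written. The paper instead cites \cite[Theorem 49]{BJJS}, which is aimed at the pointwise sequences $h_{\phi^n,\widehat w_n}(x)$ used later in Theorem~\ref{Tm2}. When every $\eta_r$ is finite, your case analysis is a complete proof of left invertibility. It is precisely what the paper leaves unsaid when it calls this ``immediate from the definition of $\mu$''.

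The point you left open cannot be closed: when some $\eta_r=\infty$, the left-invertibility claim is false. So your ``first try'' via boundedness and the circuit identity must fail. Take $\kappa=1$, $\eta_1=\infty$, $\mu(x_1)=1$, $\mu(x^1_{i,1})=2^{-i}$, and $\mu(x^1_{i,j})=4^{-i}$ for $j\ge 2$.
\begin{itemize}
\item \emph{Bounded.} We have $h(x_1)=2$, $h(x^1_{i,1})=2^{-i}$, and $h(x^1_{i,j})=1$ for $j\ge 2$, so $\|h\|_\infty=2$.
\item \emph{Quasi-$2$-isometry.} For $p\ge 1$ we get $h_p(x_1)=2+(p-1)/3$, $h_p(x^1_{i,1})=2^{-i}$, and $h_p(x^1_{i,j})=1$ for $j\ge 2$. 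Hence $h_3-2h_2+h_1=0$ everywhere.
\item \emph{Not left invertible.} $\inf_X h=0$, so $M_h=C_\phi^*C_\phi$ is not invertible and $C'_\phi$ is not even defined.
\end{itemize}

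Your own computation $\mu(x^r_{i,2})=2\mu(x^r_{i,3})-\mu(x^r_{i,4})$ already points to the reason. The quasi-isometry identities never constrain the individual values $\mu(x^r_{i,1})$. The circuit identities involve at most the sums $\sum_i\mu(x^r_{i,1})$. Boundedness only yields upper bounds on $h(x^r_{i,1})=\mu(x^r_{i,2})/\mu(x^r_{i,1})$.

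So the lemma needs an added hypothesis. Either require every $\eta_r$ to be finite, or more generally require $\inf_{r,i}\mu(x^r_{i,2})/\mu(x^r_{i,1})>0$. Under either hypothesis your argument is complete, and it is the same finiteness that the paper's proof silently uses.
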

\begin{proof}
	The left invertible property of $C_\phi $ is immediate from the  definition of  $\mu$. The remaining part of the proof  follows by Lemma \ref{lm2} together with \cite[Theorem 49]{BJJS}.
\end{proof}
\begin{lemma}\label{lm4}
	Suppose that \eqref{equ1} holds with $\kappa=1$, $C_\phi \in B(L^2(\mu))$ is quasi-$2$-isometry and  $w_\phi = \frac{1}{h\circ \phi} \in L^{\infty}(\mu)$. Then 
	
	\begin{enumerate}
		\item 
			\begin{align}\label{eqn3}
			\widehat{w}_n(x)= \left\{
			\begin{array}{ll}
				\alpha^n, & x=x_1 \quad \textrm{or} \quad x= x^1_{i, 1}, \quad i \in J_{[1, \eta_1]},\\
				\alpha^{n+1-j}\alpha_i &  x= x^1_{i, j},\quad i \in J_{[1, \eta_1]},\quad j \in J_{[2, n+1]},\\
				1 &  x= x^1_{i, j+n+1}, \quad i \in J_{[1, \eta_1]},\quad j \in \mathbb{N}.
			\end{array}\right.
			\end{align}
		
	\item 
	\begin{align}\label{eqn4}
			h_{\phi^n,\widehat{w}_n}(x)= \left\{
		\begin{array}{ll}
			\alpha^{2n-1}+\frac{\Sigma_{i=1}^{\eta_1}c_i\alpha_i^2\left[\Sigma_{j=2}^n \alpha^{2(n+1-j)} \right]}{\mu(x_1)}, & x=x_1, \\
			\alpha_i &  x= x^1_{i, 1}, i \in J_{[1, \eta_1]},\\
			1 &  x= x^1_{i, j+1}, i \in J_{[1, \eta_1]}, j \in \mathbb{N}.
		\end{array}\right.,
	\end{align}

where $\alpha=\displaystyle \frac{\mu(x_1)}{\mu(x_1)+\sum_{i=1}^{\eta_1} \mu(x^1_{i, 1})}$ and $\alpha_i= \displaystyle \frac{\mu(x^1_{i, 1})}{\mu(x^1_{i, 2})}, ~~i\in J_{[1, \eta_1]}$
	\end{enumerate}
\end{lemma}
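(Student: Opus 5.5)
The plan is to compute everything directly from the graph structure with $\kappa=1$, using the description of the quasi-$2$-isometry condition from \cite[Theorem 2.6]{DSP}. When $\kappa=1$ the map $\phi$ satisfies $\phi(x_1)=x_1$, $\phi(x^1_{i,1})=x_1$ and $\phi(x^1_{i,j+1})=x^1_{i,j}$.

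The first step is to extract the measure-theoretic consequence of the hypothesis. By \cite[Theorem 2.6]{DSP} with $k=1$, $m=2$, the sequence $\{\mu(x^1_{i,j+2})\}_{j=0}^\infty$ is a polynomial of degree $0$ in $j$. So there are constants $c_i>0$ with
\[
\mu(x^1_{i,j})=c_i \quad \text{for } j\ge 2,\ i\in J_{[1,\eta_1]},
\]
and these $c_i$ are the constants appearing in \eqref{eqn4}. The formula for $h$ then gives
\[
h(x_1)=\frac{\mu(x_1)+\sum_i\mu(x^1_{i,1})}{\mu(x_1)}=\frac1\alpha,\qquad h(x^1_{i,1})=\frac{\mu(x^1_{i,2})}{\mu(x^1_{i,1})}=\frac1{\alpha_i},\qquad h(x^1_{i,j})=1 \ (j\ge2).
\]
Hence $w_\phi=1/(h\circ\phi)$ takes the following values:
\[
w_\phi(x_1)=w_\phi(x^1_{i,1})=\alpha,\qquad w_\phi(x^1_{i,2})=\alpha_i,\qquad w_\phi(x^1_{i,j})=1 \ (j\ge3).
\]

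For (i), I will evaluate $\widehat w_n(x)=\prod_{l=0}^{n-1}w_\phi(\phi^l(x))$ by following the orbit of $x$ under $\phi$.
\begin{itemize}
\item For $x=x_1$ every factor equals $\alpha$, so $\widehat w_n(x_1)=\alpha^n$.
\item For $x=x^1_{i,1}$ the first factor is $\alpha$ and the orbit then stays at $x_1$, so again $\widehat w_n=\alpha^n$.
\item For $x=x^1_{i,j}$ with $2\le j\le n+1$, the orbit passes through $x^1_{i,j},\dots,x^1_{i,2}$, contributing $1\cdots1\cdot\alpha_i$ in $j-1$ factors. It then reaches $x^1_{i,1}$ if $j\le n$, contributing $\alpha$, and spends the remaining steps at $x_1$. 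Counting the $n$ factors gives $\alpha^{n+1-j}\alpha_i$.
\item For $x=x^1_{i,j+n+1}$ the first $n$ points of the orbit all have second index at least $3$, so every factor is $1$.
\end{itemize}
This establishes \eqref{eqn3}; the only care needed is the index bookkeeping at $j=n$ and $j=n+1$.

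For (ii), I will use the standard formula for the Radon--Nikodym derivative attached to the weighted composition $W^n$:
\[
h_{\phi^n,\widehat w_n}(x)=\frac{1}{\mu(x)}\sum_{y\in\phi^{-n}(x)}|\widehat w_n(y)|^2\mu(y).
\]
\begin{itemize}
\item The preimage $\phi^{-n}(x_1)$ consists of $x_1$ and all $x^1_{i,j}$ with $j\le n$. The terms from $x_1$ and the $x^1_{i,1}$ combine to $\alpha^{2n}\bigl(\mu(x_1)+\sum_i\mu(x^1_{i,1})\bigr)=\alpha^{2n-1}\mu(x_1)$. The terms with $2\le j\le n$ contribute $\sum_i c_i\alpha_i^2\sum_{j=2}^n\alpha^{2(n+1-j)}$. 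Dividing by $\mu(x_1)$ gives the first line of \eqref{eqn4}.
\item The preimage $\phi^{-n}(x^1_{i,1})$ is the single point $x^1_{i,n+1}$. Using $\widehat w_n(x^1_{i,n+1})=\alpha_i$ and $\mu(x^1_{i,n+1})=c_i=\mu(x^1_{i,2})$, the value is $\alpha_i^2\mu(x^1_{i,2})/\mu(x^1_{i,1})=\alpha_i$.
\item The preimage $\phi^{-n}(x^1_{i,j+1})$ is the single point $x^1_{i,j+n+1}$, with weight $1$ and equal measures $c_i$, so the value is $1$.
\end{itemize}

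I expect the main obstacle to be the careful counting of orbit and preimage indices in the boundary cases, rather than any conceptual step. In particular one must check the step $\alpha^{2n}\bigl(\mu(x_1)+\sum_i\mu(x^1_{i,1})\bigr)=\alpha^{2n-1}\mu(x_1)$, which follows from the definition of $\alpha$.
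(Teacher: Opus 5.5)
Your proposal is correct and follows the paper's route. You invoke \cite[Theorem 2.6]{DSP} to get $\mu(x^1_{i,j})=c_i$ for $j\ge 2$, compute $h$ and then $w_\phi=1/(h\circ\phi)$, multiply along orbits to obtain $\widehat w_n$, and sum $|\widehat w_n|^2\mu$ over $\phi^{-n}(x)$ to obtain $h_{\phi^n,\widehat w_n}$. Your bookkeeping is in fact cleaner than the paper's, whose proof defines $\alpha,\alpha_i$ as the reciprocals of the quantities in the statement. Your normalization $h(x_1)=1/\alpha$, $h(x^1_{i,1})=1/\alpha_i$ is the one for which \eqref{eqn3} and \eqref{eqn4} actually hold (the latter for $n\ge 1$).
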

\begin{proof}
	Suppose that  $C_\phi \in B(L^2(\mu))$ is $1$-quasi-$2$-isometry satisfying \eqref{equ1} with $\kappa=1$. Let $w=w_\phi= \displaystyle \frac{1}{h\circ \phi}, $
	where $h$ is the radon -Nikodym derivative of $\mu \circ \phi$ with respect to $\mu$. 
	Then by applying \cite[Theorem 2.6]{DSP} we get, $\mu(x^1_{i,j+1})= c_i,$ a constant for every $i\in J_{[1, \eta_1]}$, $j\in \mathbb{N}$ and $h$ can be written as 
	\begin{align}\label{eqn5}
	h(x)= \left\{
	\begin{array}{ll}
		\alpha, & \text{if}\quad x=x_1, \\
		\alpha_i& \text{if}\quad x= x^1_{i, 1},\quad i \in J_{[1, \eta_1]},\\
		1 &  x= x^1_{i, j+1}, \quad i \in J_{[1, \eta_1]},\quad j \in \mathbb{N},
	\end{array}\right.,
	\end{align}
	
 where $\alpha=\displaystyle \frac{\mu(x_1)+\sum_{i=1}^{\eta_1}\mu(x^1_{i, 1})}{\mu(x_1)}$ and $\alpha_i=\displaystyle \frac{c_i}{\mu(x^1_{i,1})}. $ 

Combining \eqref{eqn2} and \eqref{eqn5}, we arrive at \eqref{eqn3}. Therefore, statement (i) holds.
Next we verify (ii). For this consider $h_{\phi^n,\widehat{w}_n}(x)=  \displaystyle \frac{\mu_{\widehat{w}_n}(\phi ^{-n}(x))}{\mu(x)}	$\\
Now,
\begin{align*}
	\begin{array}{ll}
		h_{\phi^n,\widehat{w}_n}(x_1) & =\displaystyle \frac{\mu_{\widehat{w}_n} \left(\{x_1\} \cup \{x^1_{i,j}:  i \in J_{[1, \eta_1]}, j \in \mathbb{N} \} \right)}{\mu(x_1)}\\
		 & = \displaystyle \frac{\alpha^{2n}\mu(x_1)+ \sum_{i=1}^{\eta_1}\alpha^{2n} \mu(x^1_{i,1}) + \sum_{i=1}^{\eta_1}\sum_{j=2}^{n} \alpha^{2(n+1-j)}\alpha_{i}^{2} \mu(x^1_{i,j})	}{\mu(x_1)}\\
		 & =  	\displaystyle \alpha^{2n-1}+\frac{  \sum_{i=1}^{\eta_1} c_i \alpha_{i}^{2} \sum_{j=2}^{n} \alpha^{2(n+1-j)}	}{\mu(x_1)},
	\end{array}
\end{align*}
 
\begin{align*}
	\begin{array}{ll}
		h_{\phi^n,\widehat{w}_n}(x^1_{i,1}) & = \displaystyle \frac{|\widehat{w}_n(x^1_{i, n+1})|^{2}\mu(x^1_{i, n+1})}{\mu(x^1_{i,1})}\\
		& = \displaystyle \frac{c_i \alpha_i^{2} }{\mu(x^1_{i,1})}\\
		& = \alpha_i.
	\end{array}
\end{align*}
and 
\begin{align*}
	\begin{array}{ll}
		h_{\phi^n,\widehat{w}_n}(x^1_{i,j+1}) & = \displaystyle \frac{|\widehat{w}_n(x^1_{i, j+n+1})|^{2}\mu(x^1_{i, n+1})}{\mu(x^1_{i,j+1})}\\
		& = 1.
			\end{array}
\end{align*}
Therefore, \eqref{eqn4} is valid. 
\end{proof}
\begin{remark}\label{R1}
	The moment sequence of the Cauchy dual $C^{'}_\phi$ of the composition operator satisfies \eqref{equ1} is obtained by averaging the pointmass tranport quantities against $|f|^2$. So, the pointwise sequence 
	$ \left\{\lvert h_{\phi^n,\widehat{w}_n}(x) \rvert \right\}_{n=0}^{\infty}$ control local behaviour, while norm sequnce 
	$\left\{\lVert {C^{'}}^n_\phi f\rVert^2 \right\}_{n=0}^{\infty}$ are their global $L^2$-avarage. Therefore, to check the Stiletjes moment property of $\left\{\lVert {C^{'}}^n_\phi f\rVert^2 \right\}_{n=0}^{\infty}$, we use the corresponding property of 
		$\left\{ h_{\phi^n,\widehat{w}_n}(x)  \right\}_{n=0}^{\infty}$ \cite{BJJS, ZJJK }.
\end{remark}
\begin{theorem}\label{Tm2}
	Assume that \eqref{equ1} holds with $\kappa=1$ and $C_\phi \in B(L^2(\mu))$ is quasi-$2$-isometry. Then the Cauchy dual $C^{'}_\phi$ of $C_\phi$ is subnormal.
\end{theorem}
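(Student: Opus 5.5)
By Lemma \ref{lm3}, $C_\phi$ is left invertible, $C^{'}_\phi=w_\phi C_\phi$ is a weighted composition operator, and it suffices to show that $\left\{\lVert {C^{'}}^n_\phi f\rVert^2\right\}_{n=0}^{\infty}$ is a Stieltjes moment sequence for every $f\in L^2(\mu)$. The first step is the standard identity for weighted composition operators (as in \cite{BJJS, ZJJK}):
\[
\lVert {C^{'}}^n_\phi f\rVert^2=\int_X h_{\phi^n,\widehat{w}_n}\,|f|^2\,d\mu ,\qquad n\in\mathbb{Z}_+ .
\]
Following Remark \ref{R1}, Stieltjes moment sequences form a convex cone closed under pointwise limits, and this identity is a $|f|^2d\mu$-weighted sum over the countably many points of $X$. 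Hence it will be enough to prove that for every $x\in X$ the sequence $\{h_{\phi^n,\widehat{w}_n}(x)\}_{n=0}^\infty$, with $h_{\phi^0,\widehat{w}_0}=1$, is a Stieltjes moment sequence. A further check is that the representing measures have supports in a common bounded interval $[0,\|C^{'}_\phi\|^2]$, so that summing them gives a measure whose moments are the norm sequence.

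Next I would use the explicit formula \eqref{eqn4} of Lemma \ref{lm4} and treat the points one at a time.
\begin{enumerate}
\item At $x=x^1_{i,j+1}$ with $j\in\mathbb{N}$, the sequence is constantly $1$. It is the moment sequence of $\delta_1$.
\item At $x=x^1_{i,1}$, the sequence is $(1,\alpha_i,\alpha_i,\dots)$. It is the moment sequence of $(1-\alpha_i)\delta_0+\alpha_i\delta_1$, so it is Stieltjes provided $0\le\alpha_i\le1$.
\item At $x=x_1$, I will sum the geometric series in \eqref{eqn4} to write, for $n\ge1$,
\[
h_{\phi^n,\widehat{w}_n}(x_1)=A+B\,(\alpha^2)^n ,
\]
with explicit constants $A,B$ depending on $\alpha$, $\alpha_i$, $c_i$ and $\mu(x_1)$. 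This sequence, with value $1$ at $n=0$, is the moment sequence of $A\,\delta_1+B\,\delta_{\alpha^2}+(1-A-B)\,\delta_0$. It is therefore Stieltjes exactly when $A\ge0$, $B\ge0$ and $A+B\le1$.
\end{enumerate}
Since $w_\phi=1/(h\circ\phi)$, the quantity $\alpha$ satisfies $0<\alpha<1$, so all three supports lie in $[0,1]$.

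The main obstacle is verifying the sign and size conditions $\alpha_i\le 1$, $A,B\ge0$ and $A+B\le1$. These must come from the quasi-$2$-isometry hypothesis. By \cite[Theorem 2.6]{DSP} with $k=1$ and $m=2$, we have $\mu(x^1_{i,j+1})=c_i$ for all $j\in\mathbb{N}$, together with the identity $h_3(x_1)-2h_2(x_1)+h_1(x_1)=0$. Written out, this identity gives a linear relation among $\mu(x_1)$, $\sum_i\mu(x^1_{i,1})$ and $\sum_i c_i$. I would substitute this relation into $A$ and $B$ to express them in terms of $\alpha$ alone. I expect this to give $B$ as a nonnegative multiple of something like $(1-\alpha)$ or $\sum_i c_i\alpha_i^2$, and $A+B=\alpha^{-1}\cdot(\text{value at } n=1)\le1$.

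If the inequalities do not follow directly, I would fall back on testing the Stieltjes conditions through positive semidefiniteness of the $2\times2$ Hankel matrices. For a sequence of the form $A+B\alpha^{2n}$ these are exactly the three inequalities above. I would also need to reconcile the two conventions for $\alpha$ that appear in Lemma \ref{lm4}, namely $h(x_1)$ versus its reciprocal, before carrying out this computation.
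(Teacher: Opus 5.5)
\textbf{There is a genuine gap here, and it cannot be closed: the statement is false beyond the $2$-isometric case.}

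Your reduction is the same as the paper's. It uses Lemma \ref{lm3}, Remark \ref{R1} and formula \eqref{eqn4}, which is correct when read with $\alpha=1/h(x_1)$ and $\alpha_i=1/h(x^1_{i,1})=\mu(x^1_{i,1})/c_i$. You also identify exactly what has to be checked: $\alpha_i\le 1$, $A,B\ge 0$ and $A+B\le 1$. The paper never checks these. Its $\nu$ only reproduces the terms with $n\ge 1$, and it verifies neither the signs of its weights nor $\nu([0,\infty))=1$.

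The gap is that these inequalities do not follow from the hypothesis, and the relation you plan to use is empty. Since $h_n(x_1)=1+\mu(x_1)^{-1}\sum_i\sum_{j=1}^{n}\mu(x^1_{i,j})$, the identity $h_3(x_1)-2h_2(x_1)+h_1(x_1)=0$ says only $\sum_i\bigl(\mu(x^1_{i,3})-\mu(x^1_{i,2})\bigr)=0$. This is automatic once $\mu(x^1_{i,j+1})=c_i$. It gives no relation involving $\mu(x_1)$ or $\mu(x^1_{i,1})$, which remain free (as the paper itself notes in the proof of Theorem \ref{Tm3}). So there is nothing to substitute into $A$ and $B$.

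In fact the inequalities fail in general. Using $c_i\alpha_i=\mu(x^1_{i,1})$ one gets
\[
A+B=\frac{1}{\alpha}-\frac{\beta}{\mu(x_1)}=1+\frac{1}{\mu(x_1)}\sum_{i}\mu(x^1_{i,1})\,(1-\alpha_i),\qquad \beta=\sum_i c_i\alpha_i^2 .
\]
Hence $\alpha_i\le 1$ for all $i$ together with $A+B\le 1$ forces $\alpha_i=1$ for every $i$, i.e.\ $\mu(x^1_{i,1})=\mu(x^1_{i,2})$. That means $C_\phi$ is a $2$-isometry; in that case one checks $B>0$, and your argument goes through.

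Here is a concrete failure. Take $\eta_1=1$, $\mu(x_1)=\mu(x^1_{1,1})=1$ and $\mu(x^1_{1,j})=2$ for $j\ge 2$.
\begin{enumerate}
\item Then $h_3-2h_2+h_1=0$ on $X$, so $C_\phi$ is a quasi-$2$-isometry.
\item However, $\|{C'_\phi}^{n}\chi_{\{x_1\}}\|^2=1,\tfrac12,\tfrac14,\tfrac{3}{16},\dots$.
\item Equality in $s_1^2\le s_0s_2$ would force the representing measure to be $\delta_{1/2}$, contradicting $s_3=\tfrac{3}{16}$. So $C'_\phi$ is not subnormal.
\end{enumerate}
If instead $\mu(x^1_{1,1})=2$ and $\mu(x^1_{1,j})=1$ for $j\ge 2$, then $\alpha_1=2$. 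The sequence at $\chi_{\{x^1_{1,1}\}}$ is $2,4,4,\dots$, which violates $s_1^2\le s_0s_2$ outright.

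The first example also shows that your fallback is not equivalent to the three inequalities. There $A=\tfrac16$ and $B=\tfrac43$, so $A+B>1$, yet $s_ns_{n+2}\ge s_{n+1}^2$ holds for every $n$.

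Within quasi-$2$-isometries with $\kappa=1$, the Cauchy dual is subnormal precisely when $C_\phi$ is in addition a $2$-isometry.
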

\begin{proof}
Since $C_\phi \in B(L^2(\mu))$ is quasi-$2$-isometry staisying \eqref{equ1} with $\kappa=1$, it follows from Lemma \ref{lm2} that its Cauchy dual is given by $$C^{'}_\phi = w_\phi C_\phi,$$ 
where $w_\phi= \displaystyle \frac{1}{h \circ \phi}$. Hence, $C^{'}_\phi$ is a weighted composition operator. Applying Lemma \ref{lm3} together with Remark \ref{R1}, we conclude that $C^{'}_\phi$ is subnormal if and only if the sequence $\left\{ h_{\phi^n,\widehat{w}_n}(x)  \right\}_{n=0}^{\infty}$ is a Stieltjes moment sequence.

Now consider 
\begin{align*}
	\begin{array}{ll}
		h_{\phi^n,\widehat{w}_n}(x_1) & = \displaystyle \alpha^{2n-1}+\frac{  \sum_{i=1}^{\eta_1} c_i \alpha_{i}^{2} \sum_{j=2}^{n} \alpha^{2(n+1-j)}	}{\mu(x_1)}\\
		& = \displaystyle \alpha^{2n-1}+\frac{\sum_{i=1}^{\eta_1} c_i \alpha_{i}^{2} \alpha^2}{\mu(x_1)} \left(\frac{(\alpha^2)^{n-1}-1}{\alpha^2-1}\right)\\
		& = \displaystyle \left[ \frac{1}{\alpha} + \frac{\beta}{(\alpha^2-1)\mu(x_1)} \right]\alpha^{2n} + \frac{\beta \alpha ^2}{(1-\alpha^2)\mu(x_1)},
	\end{array}
\end{align*}
where $\beta = \sum_{i=1}^{\eta_1} c_i \alpha_{i}^{2}.$ So, if we choose a measure 
$$  \nu =\displaystyle \left[ \frac{1}{\alpha} + \frac{\beta}{(\alpha^2-1)\mu(x_1)} \right]\delta_{\alpha^{2n}} + \frac{\beta \alpha ^2}{(1-\alpha^2)\mu(x_1)}\delta_1,$$
then $\displaystyle \int_{0}^{\infty} t^n d\nu(t) = \displaystyle \left[ \frac{1}{\alpha} + \frac{\beta}{(\alpha^2-1)\mu(x_1)} \right]\alpha^{2n} + \frac{\beta \alpha ^2}{(1-\alpha^2)\mu(x_1)} = h_{\phi^n,\widehat{w}_n}(x_1). $

Since 
$$h_{\phi^n,\widehat{w}_n}(x^1_{i,1}) =\alpha_{i},~~ i \in J_{[1, \eta_1]}$$
and 
$$h_{\phi^n,\widehat{w}_n}(x^1_{i,j+1})=1,~~ i \in J_{[1, \eta_1]}, j \in \mathbb{N},$$ it follows that the sequence $\left\{ h_{\phi^n,\widehat{w}_n}(x)  \right\}_{n=0}^{\infty}$
 is a Stieltjes moment sequence. 
\end{proof}
The following results provide a characterization of the $\Delta_{C_\phi}$-regularity of quasi-$2$-isometric composition operators on directed graphs satisfying \eqref{equ1}, corresponding to the cases $\kappa = 1$ and $\kappa = 2$.

\begin{theorem}\label{Tm3}
Suppose that \eqref{equ1} holds with $\kappa=1$ and $C_\phi \in B(L^2(\mu))$ is quasi-$2$-isometry. Then $C_\phi$ is  $\Delta_{C_\phi}$-regular if and only if  $h(x_1)= h(x^1_{i,1}), $ and $ \mu(x^1_{i,j+1})\geq  \mu(x^1_{i,1}) ~~~\textrm{for all}~~ i \in J_{[1, \eta_1]}, j\in \mathbb{N}.$
\end{theorem}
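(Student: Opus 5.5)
The plan is to reduce $\Delta_{C_\phi}$-regularity to a pointwise condition on the Radon--Nikodym derivative $h$. Then I will read that condition off the explicit form \eqref{eqn5} of $h$ for a quasi-$2$-isometry with $\kappa=1$.

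\emph{Step 1 (operators as multiplications).} Since $C_\phi^*C_\phi=M_h$, we have $\Delta_{C_\phi}=M_{h-1}$. The square root $\Delta_{C_\phi}^{1/2}$ only makes sense when $\Delta_{C_\phi}\ge 0$, so regularity forces $h\ge 1$ everywhere, and I take this as part of the condition. Then $\Delta_{C_\phi}^{1/2}=M_{(h-1)^{1/2}}$. For $f\in L^2(\mu)$ and $x\in X$,
\[
(\Delta_{C_\phi}C_\phi f)(x)=(h(x)-1)f(\phi(x)),
\]
\[
(\Delta_{C_\phi}^{1/2}C_\phi\Delta_{C_\phi}^{1/2} f)(x)=(h(x)-1)^{1/2}\,(h(\phi(x))-1)^{1/2}f(\phi(x)).
\]
Testing on characteristic functions of single points shows that regularity holds if and only if $h\ge1$ and, for every $x\in X$,
\[
h(x)=1 \quad\text{or}\quad h(x)=h(\phi(x)).
\]

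\emph{Step 2 (checking vertices).} By \cite[Theorem 2.6]{DSP}, as used in the proof of Lemma~\ref{lm4}, we have $\mu(x^1_{i,j+1})=c_i$ for $j\in\mathbb N$. Hence \eqref{eqn5} applies: $h(x^1_{i,j+1})=1$, $h(x^1_{i,1})=c_i/\mu(x^1_{i,1})$, and $h(x_1)=1+\sum_i\mu(x^1_{i,1})/\mu(x_1)\ge1$. I then check the vertices one by one.
\begin{itemize}
\item At $x_1$ we have $\phi(x_1)=x_1$, so the pointwise condition is automatic.
\item At $x^1_{i,j+1}$ with $j\ge1$ we have $h=1$, so the condition is automatic.
\item At $x^1_{i,1}$ we have $\phi(x^1_{i,1})=x_1$, so the condition reads $h(x^1_{i,1})\in\{1,h(x_1)\}$.
\end{itemize}
Positivity $h\ge1$ reduces to $h(x^1_{i,1})\ge1$, i.e. $c_i\ge\mu(x^1_{i,1})$, i.e. $\mu(x^1_{i,j+1})\ge\mu(x^1_{i,1})$ for all $j$. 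Combining these gives the stated inequality together with the requirement on $h(x^1_{i,1})$. The converse is obtained by reversing the same pointwise computation.

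\emph{Main obstacle.} The hard step is matching the dichotomy ``$h(x^1_{i,1})=1$ or $h(x^1_{i,1})=h(x_1)$'' with the statement, which lists only $h(x_1)=h(x^1_{i,1})$. The computation above suggests that the degenerate case $h(x^1_{i,1})=1$, i.e. $\mu(x^1_{i,j+1})=\mu(x^1_{i,1})$, also yields regularity. Note that $h(x_1)>1$ whenever $\eta_1\ge1$, so the two alternatives are genuinely distinct. I would first recheck the identification of $\Delta_{C_\phi}^{1/2}$ and the action of $C_\phi$ at $x^1_{i,1}$. If the computation stands, I would state the characterization with the alternative $h(x^1_{i,1})\in\{1,h(x_1)\}$ and record the stated condition as its nondegenerate case.
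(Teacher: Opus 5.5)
Your argument is the paper's argument. The proof there computes $\Delta_{C_\phi}C_\phi f$ and $\Delta_{C_\phi}^{1/2}C_\phi\Delta_{C_\phi}^{1/2}f$ explicitly in \eqref{eqn6} and \eqref{eqn7} and compares them. That is exactly your comparison of the coefficients $h(x)-1$ and $(h(x)-1)^{1/2}(h(\phi(x))-1)^{1/2}$ in front of $f(\phi(x))$; your general pointwise criterion is a cleaner packaging of the same computation. Your identifications $\Delta_{C_\phi}^{1/2}=M_{(h-1)^{1/2}}$ and $\phi(x^1_{i,1})=x_1$ are correct, so there is nothing to recheck. The dichotomy $h(x^1_{i,1})\in\{1,h(x_1)\}$ is right, and the discrepancy you found is an error in the statement itself, not in your proof. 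The paper passes from \eqref{eqn6}--\eqref{eqn7} to $h(x_1)=h(x^1_{i,1})$ by implicitly cancelling the factor $\sqrt{h(x^1_{i,1})-1}$. That factor vanishes precisely in the case you isolated.

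Here is a concrete counterexample to the ``only if'' direction. Take $\eta_1=1$ and $\mu(\{x\})=1$ for every $x\in X$.
\begin{itemize}
\item Then $h_n(x_1)=n+1$ and $h_n(x^1_{1,j})=1$ for all $n\in\mathbb{Z}_+$ and $j\in\mathbb{N}$. Hence $h_{n+2}-2h_{n+1}+h_n=0$, so $C_\phi$ is even a $2$-isometry.
\item Here $\Delta_{C_\phi}=M_{\chi_{\{x_1\}}}$ is an orthogonal projection, so it equals its own square root.
\item Since $\phi(x_1)=x_1$, both $\Delta_{C_\phi}C_\phi f$ and $\Delta_{C_\phi}^{1/2}C_\phi\Delta_{C_\phi}^{1/2}f$ equal $f(x_1)\chi_{\{x_1\}}$.
\end{itemize}
Thus $C_\phi$ is $\Delta_{C_\phi}$-regular, although $h(x_1)=2\neq 1=h(x^1_{1,1})$.

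So your proposal establishes only the ``if'' direction of the stated theorem. In that direction the inequality is in fact automatic and strict, because $h(x_1)>1$. The correct characterization is yours: $C_\phi$ is $\Delta_{C_\phi}$-regular if and only if, for each $i\in J_{[1,\eta_1]}$, either $\mu(x^1_{i,2})=\mu(x^1_{i,1})$ or $h(x^1_{i,1})=h(x_1)$.

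The same example is $\Delta_{C_\phi}$-regular but fails the kernel condition of Theorem~\ref{Tm5}(a). So the remark after that theorem, which claims the two are equivalent for $\kappa=1$, fails for the same reason.
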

\begin{proof}
	Given that \eqref{equ1} holds with $\kappa=1$. Since $C_\phi \in B(L^2(\mu))$ is quasi-$2$-isometry, it follows that $\mu(x^1_{[i,j+1]})=c_i,$ a constant for  $i\in J_{{1,\eta_1}}, j\in \mathbb{N}.$ Moreover,  $\mu(x_1)$ and $\mu(x^1_{i,1}), i\in J_{[1,\eta_1]}$, may be chosen arbitrarily as positive numbers.
	
	To check the $\Delta_{C_\phi}$--regularity of $C_\phi$, we first find $\Delta_{C_\phi}C_\phi $ and $\Delta_{C_\phi}^{1/2}C_\phi \Delta_{C_\phi}^{1/2} $. Let $f \in L^2(\mu)$. Then  for $x \in X$, 
	\begin{align*}
		\begin{array}{ll}
			\Delta_{C_\phi}C_\phi f(x)  & = C^{*}_\phi C_\phi (f\circ \phi)(x) \\	
									& = (h-1)(x)(f\circ \phi)(x),
		\end{array}
	\end{align*}
where 
\begin{align*}
	(h-1)(x) = \left\{
	\begin{array}{ll}
		\displaystyle \frac{\sum_{i=1}^{\eta_1}\mu(x^1_{i,1})}{\mu(x_1)} & \text{if} \quad x=x_1\\
		\displaystyle \frac{c_i - \mu(x^1_{i,1})}{\mu(x^1_{i,1})} & \text{if} \quad x= x^1_{i,1}, ~~ i \in J_{[1, \eta_1]}\\
		0 &  \text{if} \quad x= x^1_{i,j+1}, ~~ i \in J_{[1, \eta_1]},~~ j \in \mathbb{N}.
	\end{array} \right.
\end{align*}
This implies 
\begin{align}\label{eqn6}
	\Delta_{C_\phi}C_\phi f = (h(x_1)-1)f(x_1) \chi_1 + \displaystyle \sum_{i=1}^{\eta_1}(h(x^1_{i,1})-1)f(x_1) \chi^1_{i,1}, 
\end{align}

and 
$$ \Delta_{C_\phi} f = (h(x_1)-1)f(x_1) \chi_1 + \displaystyle \sum_{i=1}^{\eta_1}(h(x^1_{i,1})-1)f(x^1_{i,1}) \chi^1_{i,1}, $$

where $ \chi_1$ and $\chi^1_{i,1}$ are the characteristic function of $\{x_1\}$ and $\{x^1_{i,1}\}$ for $i \in J_{[1, \eta_1]}$ respectively.

Observe that the operator $\Delta_{C_\phi}^{1/2}$ exists precisely when $\Delta_{C_\phi} $ is positive. Since this condition is equivalent to $c_i \geq \mu(x^1_{i,1}), ~~  i \in J_{[1, \eta_1]}. $ Therefore, $\Delta_{C_\phi}^{1/2}$ exists if and only if $ \mu(x^1_{i,j+1})\geq  \mu(x^1_{i,1}) ~~\textrm{for all}~~ i \in J_{[1, \eta_1]}, j\in \mathbb{N}.$

Now  consider 
$\Delta_{C_\phi}^{1/2}C_\phi \Delta_{C_\phi}^{1/2} f $
\begin{align}\label{eqn7}
	\begin{array}{ll}
	& = \Delta_{C_\phi}^{1/2}C_\phi \displaystyle \left[  \sqrt{(h(x_1)-1)}f(x_1) \chi_1 + \displaystyle \sum_{i=1}^{\eta_1}\sqrt{(h(x^1_{i,1})-1)}f(x^1_{i,1}) \chi^1_{i,1}   \right]  \\
	 & = \Delta_{C_\phi}^{1/2} \displaystyle \left[  \sqrt{(h(x_1)-1)}f(x_1) \left( \chi_1 + \sum_{i=1}^{\eta_1} \chi^1_{i,1} \right) + \displaystyle \sum_{i=1}^{\eta_1}\sqrt{(h(x^1_{i,1})-1)}f(x^1_{i,1}) \chi^1_{i,2}    \right] \\
	 & = (h(x_1)-1)f(x_1)\chi_1 + \displaystyle \sqrt{(h(x_1)-1)}f(x_1) \sum_{i=1}^{\eta_1} \sqrt{(h(x^1_{i,1})-1)} \chi^1_{i,1}  \\
	\end{array}
\end{align}
Hence, from \eqref{eqn6} and  \eqref{eqn7},the characterization of the $\Delta_{C_\phi}$-regularity of $C_\phi$ follows.
\end{proof}
\begin{example}\label{eg2}
Let $\kappa=1$ and suppose that $C_\phi \in B(L^2(\mu))$ is a quasi-$2$-isometry. Then $C_\phi$ need not be $\Delta_{C_\phi}$-regular in general. 
	Indeed, choose $\eta_1 = 2, c_1 = c_2 = 1, \mu(x_1) = 2 ~~\text{and} ~~\mu(x^1_{1,1})= \mu(x^1_{2,1}) = 1/2 .$ Then $h(x_1)= 3/2$ and $h(x^1_{1,1})= h(x^1_{2,1})=2 .$ Since $h(x_1)\neq h(x^1_{i,1})$ for $i=1,2$, it follows from Theorem~\ref{Tm3} that $C_\phi$ is not $\Delta_{C_\phi}$-regular.
	On the other hand, let $\eta_1 = 2, c_1 = c_2 = 1, \mu(x_1) = 1 ~~~\text{and}~~ \mu(x^1_{1,1})= \mu(x^1_{2,1}) = 1/2 .$ Then $h(x_1)= 2 $ and $h(x^1_{1,1})= h(x^1_{2,1})=2 .$ Hence, by Theorem~\ref{Tm3}, $C_\phi$ is $\Delta_{C_\phi}$-regular.
	Therefore, even if $C_\phi$ is a quasi-$2$-isometry,  we observed that $C_\phi$ may or may not be $\Delta_{C_\phi}$-regular.
	
\end{example}

\begin{theorem}\label{Tm4}
	Suppose that \eqref{equ1} holds with $\kappa=2$ and $C_\phi \in B(L^2(\mu))$ is quasi-$2$-isometry. Then $C_\phi$ is  $\Delta_{C_\phi}$-regular if and only if $h(x_1)= h(x_2) = h(x^r_{i,1}) $ and $ \mu(x^r_{i,j+1})\geq  \mu(x^r_{i,1}) ~~\textrm{for all} ~~r \in J_{[1, \kappa]},  i \in J_{[1, \eta_r]}, j\in \mathbb{N}.$ 
\end{theorem}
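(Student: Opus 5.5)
We follow the template of the proof of Theorem \ref{Tm3}, now for $\kappa=2$. First we write down the graph concretely. For $\kappa=2$ we have $\phi(x_1)=x_2$ and $\phi(x_2)=x_1$. The first vertex of each branch at $x_r$ is sent to $x_r$, that is, $\phi(x^r_{i,1})=x_r$, and $\phi(x^r_{i,j+1})=x^r_{i,j}$.

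By \cite[Theorem 2.6]{DSP}, the quasi-$2$-isometry hypothesis gives $\mu(x^r_{i,j+1})=c^r_i$, a constant in $j$. Hence $h(x^r_{i,j+1})=1$ for $j\in\mathbb{N}$, and $h(x^r_{i,1})=c^r_i/\mu(x^r_{i,1})$. At the circuit vertices,
\[
h(x_1)=\frac{\mu(x_2)+\sum_{i}\mu(x^1_{i,1})}{\mu(x_1)},\qquad
h(x_2)=\frac{\mu(x_1)+\sum_{i}\mu(x^2_{i,1})}{\mu(x_2)}.
\]
So $\Delta_{C_\phi}=M_{h-1}$ is a multiplication operator. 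It is supported only on $\{x_1,x_2\}\cup\{x^r_{i,1}\}$.

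Next we characterise when $\Delta_{C_\phi}\ge 0$, so that $\Delta_{C_\phi}^{1/2}=M_{\sqrt{h-1}}$ makes sense. This requires $h(x^r_{i,1})\ge 1$, i.e. $c^r_i\ge \mu(x^r_{i,1})$, which is the stated inequality $\mu(x^r_{i,j+1})\ge\mu(x^r_{i,1})$. It also requires $h(x_1),h(x_2)\ge 1$. This last condition is not automatic for $\kappa=2$, unlike the $\kappa=1$ case. We expect it to follow from the equality conditions, since $h(x_1)=h(x_2)=h(x^r_{i,1})\ge1$. For necessity, we will either note that regularity is only meaningful when $\Delta_{C_\phi}\ge0$, or derive $h(x_r)\ge1$ directly from the regularity identity evaluated at $x_r$.

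Then we compute both sides on the basis vectors $\chi_y$, where $\chi_y$ denotes the characteristic function of $\{y\}$. Using $C_\phi\chi_y=\chi_{\phi^{-1}(y)}$, we get
\[
\Delta_{C_\phi}C_\phi\chi_y=(h-1)\chi_{\phi^{-1}(y)},\qquad
\Delta_{C_\phi}^{1/2}C_\phi\Delta_{C_\phi}^{1/2}\chi_y=\sqrt{h(y)-1}\,\sqrt{h-1}\,\chi_{\phi^{-1}(y)}.
\]
So regularity holds iff $(h(z)-1)=\sqrt{h(\phi(z))-1}\sqrt{h(z)-1}$ for every $z$, i.e. $h(z)=h(\phi(z))$ whenever $h(z)\neq1$. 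We then take $z$ over the relevant vertices:
\begin{itemize}
\item $z=x_1$ gives $h(x_1)=h(x_2)$, provided $h(x_1)\ne1$;
\item $z=x_2$ gives $h(x_2)=h(x_1)$;
\item $z=x^r_{i,1}$ gives $h(x^r_{i,1})=h(x_r)$;
\item $z=x^r_{i,j+1}$ gives $h(z)=1$, so there is no condition.
\end{itemize}
Combining these yields $h(x_1)=h(x_2)=h(x^r_{i,1})$. The converse is immediate by substituting back into the vertex identity.

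The main obstacle is the degenerate case where some $h(z)=1$, where the pointwise identity gives no information. We note that $h(x_r)>1$ holds automatically whenever branches exist at $x_r$, or more generally since $\mu(x_1)+\mu(x_2)$ is redistributed between the two. To be safe, we will state the characterisation modulo vertices with $h=1$, or argue that $h(x_1)=1=h(x_2)$ forces the absence of branches, contradicting the standing assumption that some $\eta_r\neq0$.
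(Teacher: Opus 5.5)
Your route is the same as the paper's: write $\Delta_{C_\phi}=M_{h-1}$ and compare $\Delta_{C_\phi}C_\phi$ with $\Delta_{C_\phi}^{1/2}C_\phi\Delta_{C_\phi}^{1/2}$ vertex by vertex. Your reduction to the pointwise identity $h(z)-1=\sqrt{h(\phi(z))-1}\,\sqrt{h(z)-1}$ is correct. So is its consequence that $h(z)=h(\phi(z))$ whenever $h(z)\neq 1$, and so is the ``if'' direction.

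\textbf{The gap.} In the third bullet you drop the proviso you had just derived. At $z=x^r_{i,1}$ the identity gives $h(x^r_{i,1})=h(x_r)$ only if $h(x^r_{i,1})\neq 1$, that is, only if $\mu(x^r_{i,2})\neq\mu(x^r_{i,1})$. When $\mu(x^r_{i,2})=\mu(x^r_{i,1})$, the identity at that vertex reads $0=0$. This case cannot be argued away, because the ``only if'' part of the statement is false there.

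\textbf{Counterexample.} Take $\eta_1=\eta_2=1$, $\mu(x_1)=\mu(x_2)=1$, and $\mu(x^r_{1,j})=a>0$ for $r=1,2$ and $j\in\mathbb N$. Then:
\begin{enumerate}
\item $h(x_1)=h(x_2)=1+a$, and $h\equiv 1$ on all branch vertices.
\item $h_2(x_r)=1+2a$, so $h_2-2h+1=0$ everywhere. Hence $C_\phi$ is a $2$-isometry, in particular a quasi-$2$-isometry.
\item $\Delta_{C_\phi}=M_{h-1}\ge 0$, and
\[
\Delta_{C_\phi}C_\phi f=a f(x_2)\chi_1+a f(x_1)\chi_2=\Delta_{C_\phi}^{1/2}C_\phi\Delta_{C_\phi}^{1/2}f ,
\]
so $C_\phi$ is $\Delta_{C_\phi}$-regular.
\end{enumerate}
Yet $h(x^r_{1,1})=1\neq 1+a=h(x_r)$. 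The paper's proof reads the result off \eqref{eqn9} and \eqref{eqn10}, which amounts to silently dividing by $\sqrt{h(x^r_{i,1})-1}$. It therefore has the same hole.

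\textbf{Your remedies target the wrong vertices.} On the circuit there is no degenerate case. The identities at $z=x_1$ and $z=x_2$ give directly
\[
h(x_1)-1=\sqrt{(h(x_1)-1)(h(x_2)-1)}=h(x_2)-1 .
\]
Then $\mu(x_1)h(x_1)+\mu(x_2)h(x_2)=\mu(x_1)+\mu(x_2)+\sum_{r,i}\mu(x^r_{i,1})$ forces the common value to exceed $1$. Your claim that $h(x_r)>1$ whenever branches exist at $x_r$ is false for $\kappa=2$, as you observed yourself earlier: the quasi-$2$-isometry relation permits $h(x_1)<1$.

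\textbf{What your argument actually proves.} It establishes a corrected characterization: $C_\phi$ is $\Delta_{C_\phi}$-regular if and only if all three of the following hold:
\begin{enumerate}
\item $\mu(x^r_{i,2})\ge\mu(x^r_{i,1})$ for all $r,i$;
\item $h(x_1)=h(x_2)$;
\item for each $r,i$, either $h(x^r_{i,1})=1$ or $h(x^r_{i,1})=h(x_r)$.
\end{enumerate}
Your fallback ``modulo vertices with $h=1$'' is this weaker statement, not a proof of the theorem as stated.
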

\begin{proof}
	Let $\kappa=2$. Assume that \eqref{equ1} holds and $C_\phi \in B(L^2(\mu))$ is quasi-$2$-isometry. Then $\mu(x^r_{i,j+1})=c^{(r)}_i $ for $r\in J_{[1, \kappa]}, ~~ i\in J_{[1, \eta_r]}, ~~ j\in \mathbb{N},$ $\displaystyle \sum_{i=1}^{\eta_{r}} c^{(r)}_i = c^{(r)}< \infty $ and 
	$$
-2\mu(x_1)+2\mu(x_2) + 2 \sum_{i=1}^{\eta_{1}}\mu(x^1_{i,1}) - 2 \sum_{i=1}^{\eta_{2}}\mu(x^2_{i,1})- c^{(1)} + c^{(2)} = 0.
	$$
	To verify the $\Delta_{C_\phi}$-regularity of $C_\phi,$  let us consider $h-1$, where $h$ is the Radon - Nikodym derivative of $\mu \circ \phi$ with respect to $\mu$. 
	\begin{align}\label{eqn8}
		(h-1)(x) = \left\{
		\begin{array}{ll}
			\displaystyle \frac{\mu(x_2) - \mu(x_1) + \sum_{i=1}^{\eta_1}\mu(x^1_{i,1})}{\mu(x_1)} & \text{if} \quad x=x_1\\
			\displaystyle \frac{\mu(x_1) - \mu(x_2) + \sum_{i=1}^{\eta_2}\mu(x^2_{i,1})}{\mu(x_2)} & \text{if} \quad x=x_2\\
			\displaystyle \frac{c^{(r)}_i - \mu(x^r_{i,1})}{\mu(x^r_{i,1})} & \text{if} \quad x= x^r_{i,1}, ~~ r\in J_{[1, \kappa]}, ~~ i \in J_{[1, \eta_r]}\\
			0 &  \text{if} \quad x= x^1_{i,j+1}, ~~ i \in J_{[1, \eta_1]},~~ j \in \mathbb{N}.
		\end{array} \right.
	\end{align} 
By \eqref{eqn8}, we see that 
\begin{align}\label{eqn9}
	\Delta_{C_\phi} C_\phi f = (h(x_1)-1)f(x_2) \chi_1 + (h(x_2)-1)f(x_1) \chi_2 +  \sum_{r=1}^{\kappa}\sum_{i=1}^{\eta_r}(h(x^r_{i,1})-1)f(x_r) \chi^r_{i,1},
\end{align}
where $\chi_1, \chi_2,~~\text{and}~~\chi^r_{i,1}$ are the characteristic functions of $\{x_1\}, \{x_2\}, ~~\text{and}~~ \{x^r_{i,1}\}$ respectively  for $ r\in J_{[1, \kappa]}, ~~ i \in J_{[1, \eta_r]}$.
Also, 
$\Delta_{C_\phi}^{1/2}C_\phi \Delta_{C_\phi}^{1/2} f $
\begin{align}\label{eqn10}
	\begin{array}{ll}
		& = \Delta_{C_\phi}^{1/2}C_\phi \displaystyle \left[  \sqrt{(h(x_1)-1)}f(x_1) \chi_1 + \sqrt{(h(x_2)-1)}f(x_2) \chi_2 \right] \\
		& \quad \quad \quad \quad \quad \quad \quad \quad \quad 
		+ \Delta_{C_\phi}^{1/2}C_\phi \displaystyle \left[ \sum_{r=1}^{\kappa} \sum_{i=1}^{\eta_r}\sqrt{(h(x^r_{i,1})-1)}f(x^r_{i,1}) \chi^r_{i,1}   \right]  \\
		& = \Delta_{C_\phi}^{1/2} \displaystyle \left[  \sqrt{(h(x_1)-1)}f(x_1) \left( \chi_2 + \sum_{i=1}^{\eta_1} \chi^1_{i,1} \right) + \sqrt{(h(x_2)-1)}f(x_2) \left( \chi_1 + \sum_{i=1}^{\eta_2} \chi^2_{i,1} \right) \right] \\            
		& \quad \quad \quad \quad \quad \quad  + \Delta_{C_\phi}^{1/2}  \left[ \displaystyle \sum_{r=1}^{\kappa}\sum_{i=1}^{\eta_r}\sqrt{(h(x^r_{i,1})-1)}f(x^r_{i,1}) \chi^1_{i,2}    \right] \\
		& = \sqrt{(h(x_1)-1)}f(x_1) \left[ \sqrt{(h(x_2)-1)}\chi_2 + \sum_{i=1}^{\eta_1} \sqrt{(h(x^1_{i,1})-1)} \chi^1_{i,1} \right]  \\
		& + \sqrt{(h(x_2)-1)}f(x_2) \left[\sqrt{(h(x_1)-1)}\chi_1 + \sum_{i=1}^{\eta_2} \sqrt{(h(x^2_{i,1})-1)} \chi^2_{i,1} \right] . 
	\end{array}
\end{align}
	The required result follows from  \eqref{eqn9} and  \eqref{eqn10} 
\end{proof}

In what follows, we investigate and characterize the kernel condition for quasi-$2$-isometric composition operators $C_\phi \in B(L^2(\mu))$ satisfying \eqref{equ1}. 
\begin{theorem}\label{Tm5}
	 Assume that \eqref{equ1} holds and that $C_\phi \in B(L^2(\mu))$ is a quasi-$2$-isometry. Then the following statements are true; 
	 \begin{enumerate}
	 	\item[(a)]  For $\kappa = 1$, $C_\phi$ satisfies the kernel condition if and only if   $ h(x_1) = h(x^1_{i,1}), ~~ i\in J_{[1, \eta_{1}]}.$
	 	\item[(b)]   For $\kappa =2$, $C_\phi$ satisfies the kernel condition if and only if   $ h(x_1) = h(x^2_{i,1}), ~~ i\in J_{[1, \eta_{2}]}$ and 
	 	$ h(x_2) = h(x^1_{i,1}), ~~ i\in J_{[1, \eta_{1}]}.$
	 \end{enumerate}	  
\end{theorem}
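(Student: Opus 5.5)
We first compute $\mathcal N(C_\phi^*)$ and then test the kernel condition $C_\phi^*C_\phi(\mathcal N(C_\phi^*))\subseteq \mathcal N(C_\phi^*)$ directly.

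\emph{Step 1: the kernel of $C_\phi^*$.} For a composition operator on a discrete space we use
\[
C_\phi^*g(y)=\frac{1}{\mu(y)}\sum_{x\in\phi^{-1}(y)} g(x)\mu(x).
\]
Hence $g\in\mathcal N(C_\phi^*)$ if and only if $\sum_{x\in\phi^{-1}(y)}g(x)\mu(x)=0$ for every $y\in X$.

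\emph{Step 2: the fibres for $\kappa=1$.} In this case the fibres are $\phi^{-1}(x_1)=\{x_1\}\cup\{x^1_{i,1}:i\in J_{[1,\eta_1]}\}$ and $\phi^{-1}(x^1_{i,j})=\{x^1_{i,j+1}\}$. So $g\in\mathcal N(C_\phi^*)$ exactly when
\[
g(x^1_{i,j+1})=0 \ \text{ for } j\in\mathbb N, \qquad g(x_1)\mu(x_1)+\sum_{i}g(x^1_{i,1})\mu(x^1_{i,1})=0 .
\]

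\emph{Step 3: the condition for $\kappa=1$.} We use $C_\phi^*C_\phi=M_h$. Then $M_h g$ is supported on $\{x_1,x^1_{i,1}\}$, and it lies in the kernel if and only if
\[
h(x_1)g(x_1)\mu(x_1)+\sum_i h(x^1_{i,1})g(x^1_{i,1})\mu(x^1_{i,1})=0 .
\]
This must hold for all $g$ on this fibre satisfying the single linear constraint of Step 2. So the functional with coefficients $(h(x_1)\mu(x_1),\,h(x^1_{i,1})\mu(x^1_{i,1}))$ must be a scalar multiple of the functional with coefficients $(\mu(x_1),\,\mu(x^1_{i,1}))$. This is exactly $h(x_1)=h(x^1_{i,1})$ for all $i$, which gives (a).

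\emph{Step 4: the fibres for $\kappa=2$.} Here we use the parent function with $\Phi_2$, so that $\phi(x_1)=x_2$ and $\phi(x_2)=x_1$. The vertices $x^1_{i,1}$ map to $x_1$ and the vertices $x^2_{i,1}$ map to $x_2$. The fibres are therefore
\[
\phi^{-1}(x_1)=\{x_2\}\cup\{x^1_{i,1}\},\qquad \phi^{-1}(x_2)=\{x_1\}\cup\{x^2_{i,1}\}.
\]
Deeper branch points have singleton fibres, which force $g$ to vanish there.

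\emph{Step 5: the condition for $\kappa=2$.} The kernel condition now splits into two independent fibre conditions, because the fibres are disjoint and $M_h$ acts diagonally. The same proportionality argument as in Step 3 gives $h(x_2)=h(x^1_{i,1})$ on $\phi^{-1}(x_1)$ and $h(x_1)=h(x^2_{i,1})$ on $\phi^{-1}(x_2)$. This gives (b). We should double-check the fibre labelling against the paper's convention for $\Phi_2$, since the pairing of indices in (b) depends on it.

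\emph{Role of the quasi-$2$-isometry hypothesis.} It is used only to ensure $h\in L^\infty$ and that the formulas from the proofs of Theorems \ref{Tm3} and \ref{Tm4} are available. The kernel argument itself is purely fibrewise.

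\emph{Main obstacle.} The delicate step is the linear-algebra claim in Step 3: vanishing of one functional on the kernel of another forces proportionality. With at least two points in the fibre, we verify it by testing $g$ supported on two points of the fibre. We choose $g=\mu(b)\chi_a-\mu(a)\chi_b$ for points $a,b$ of the fibre, which lies in the kernel, and substituting it forces $h(a)=h(b)$. The converse direction is immediate, since equal values of $h$ on the fibre make $M_h$ act as a scalar there.
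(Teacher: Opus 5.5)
Your argument is correct and is essentially the paper's. The paper gets (a) by quoting Proposition~4.8 of [ZJJK] and dismisses (b) as ``examining the atoms of $\phi^{-1}\mathcal{A}$''. Those atoms are exactly the fibres $\phi^{-1}(\{y\})$ you analyse, so you have written out in full the criterion that the kernel condition holds iff $h$ is constant on each fibre. Your fibres $\phi^{-1}(x_1)=\{x_2\}\cup\{x^1_{i,1}\}$ and $\phi^{-1}(x_2)=\{x_1\}\cup\{x^2_{i,1}\}$ match the paper's convention; compare the formula for $h(x_1)$ in the proof of Theorem~\ref{Tm4}.

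One small correction to your side remark: your proof never uses the quasi-$2$-isometry hypothesis. Moreover, $h\in L^\infty(\mu)$ already follows from the boundedness of $C_\phi$, so that hypothesis is simply superfluous for this statement.
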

\begin{proof}
(a) Since  $C_\phi \in B(L^2(\mu))$ is a quasi-$2$-isometry, it follows that $\mu(x^1_{i, j+1})= c^{(1)}_i,$ where $c^{(1)}_i$ is consatant  for each $  i\in J_{[1, \eta_{1}]}, ~~~ j \in \mathbb{N},$ with $~~~ \displaystyle \sum_{i=1}^{\eta_{1}} c^{(1)}_i = c^{(1)} < \infty $. Moreover,  $\mu(x_1)$ and  $\mu(x^1_{i, 1}) $ may be chosen as  arbitrarily as positive numbers. Therefore, by  \cite[Proposition 4.8]{ZJJK } we get   $C_\phi$ satisfies the kernel condition if and only if   $ h(x_1) = h(x^1_{i,1}), ~~ i\in J_{[1, \eta_{1}]}.$

(b) For $\kappa = 2,$ the necessary and sufficient condition for $C_\phi$ to satisfy the kernel condition can be readily obtained by examining the atoms of  $\phi^{-1}\mathcal{A}$ together with the fact that $C_\phi$ is a  quasi-$2$-isometry. 
\end{proof}

\begin{remark}
	For $\kappa = 1$, the kernel condition and $\Delta_{C_\phi}$-regularity of a quasi-$2$-isometric composition operator  $C_\phi$ satisfying \eqref{equ1} are equivalent. In contrast, when $\kappa = 2$, $\Delta_{C_\phi}$-regularity  implies the kernel condition; however, the following example shows that the  converse  does not is hold in general. 
\end{remark}
	\begin{example}\label{eg3}
		Let $\kappa = 2$, $\eta_{1} = \eta_{2} = 1$. Take $\mu(x_1)=\mu(x_2) = 1, c^{(1) }= c^{(1)}_1 = 2/3, c^{(2) }= c^{(2)}_1 = 1/2, \mu(x^1_{1,1}) = 1/2$ and $ \mu(x^2_{1,1}) = 1/3.$ 
		
		Then 
		$$                                         
		h(x_1) = \displaystyle \frac{\mu(x_2) + \mu(x^1_{1,1})}{\mu(x_1)} = \frac{3}{2}, \quad h(x^2_{1,1}) = \frac{c^{(2)}_1}{\mu(x^2_{1,1})}= \frac{3}{2},
		$$
		
		$$
			h(x_2) = \displaystyle \frac{\mu(x_1) + \mu(x^2_{1,1})}{\mu(x_2)} = \frac{4}{3}, \quad
			h(x^1_{1,1}) = \frac{c^{(1)}_1}{\mu(x^1_{1,1})}= \frac{4}{3}.
		$$
		Therefore, by Theorem \ref{Tm5},  $C_\phi$ satisfies kernel condition, whereas  $\Delta_{C_\phi}$-regularity does not hold.
	\end{example}


\end{document}